\newtheorem{thm}{Theorem}[section]
\newtheorem{cor}[thm]{Corollary}
\newtheorem{lem}[thm]{Lemma}
\newtheorem{ques}[thm]{Question}
\newtheorem{prop}[thm]{Proposition}
\theoremstyle{definition}
\newtheorem{defn}[thm]{Definition}
\newtheorem{rem}[thm]{Remark}
\numberwithin{equation}{section}
\begin{document}

\title[Local Curvature Estimates of Long-Time Solutions to KRF]{Local Curvature Estimates of Long-Time Solutions to the K\"ahler-Ricci Flow}

\author{Frederick Tsz-Ho Fong}
\address{Department of Mathematics, Hong Kong University of Science and Technology, Clear Water Bay, Sai Kung, Kowloon, Hong Kong S.A.R., China}
\email{frederick.fong@ust.hk}
\thanks{Fong is partially supported by Hong Kong Research Grant Council Early Career Scheme \#26301316 and General Research Fund \#16300018}

\author{Yashan Zhang}
\address{School of Mathematics and Hunan Province Key Lab of Intelligent Information Processing and 
Applied Mathematics, Hunan University, Changsha 410082, China}
\email{yashanzh@hnu.edu.cn}
\thanks{Zhang is partially supported by Fundamental Research Funds for the Central Universities (No. 531118010468) and National Natural Science Foundation of China (No. 12001179)}

\begin{abstract}
We study the local curvature estimates of long-time solutions to the normalized K\"ahler-Ricci flow on compact K\"ahler manifolds with semi-ample canonical line bundle. Using these estimates, we prove that on such a manifold, the set of singular fibers of the semi-ample fibration on which the Riemann curvature blows up at time-infinity is independent of the choice of the initial K\"ahler metric. Moreover, when a regular fiber of the semi-ample fibration is not a finite quotient of a torus, we determine the exact curvature blow-up rate of the K\"ahler-Ricci flow near the regular fiber.
\end{abstract}

\keywords{K\"ahler-Ricci flow; Semi-ample canonical line bundle; Singularity type; Curvature estimate; Curvature blow-up rate}
\renewcommand{\subjclassname}{%
  \textup{2010} Mathematics Subject Classification}
\subjclass[2010]{Primary 53C44}

\maketitle

\section{Introduction}

Let $X$ be an $n$-dimensional compact K\"ahler manifold. We study the solution  $\omega=\omega(t)$, $t\in[0,\infty)$, to the (normalized) K\"ahler-Ricci flow
\begin{equation}\label{nkrf}
\partial_t\omega=-Ric(\omega)-\omega
\end{equation}
starting from any K\"ahler metric $\omega_0$ on $X$.

The maximal existence time theorem of the K\"ahler-Ricci flow by Cao, Tsuji, Tian-Z.Zhang \cite{C,TZz,Ts} showed that the existence of long time solutions of \eqref{nkrf} is equivalent to nefness of the canonical line bundle $K_X$. The Abundance Conjecture predicts that if the canonical line bundle of an algebraic manifold is nef (numerically effective), then it is semi-ample. Hence, it is natural to study the K\"ahler-Ricci flow  on $n$-dimensional compact K\"ahler manifolds with semi-ample canonical line bundles. Let $X$ be such a manifold, the convergence and singular behaviors of the K\"ahler-Ricci flow were extensively studied by various authors including Tian-Z.Zhang \cite{TZz}, Song-Tian \cite{ST07,ST12,ST16}, Z.Zhang \cite{Z}, Gill \cite{Gi}, the first-named author and Z.Zhang \cite{FZ}, Tosatti-Weinkove-Yang \cite{TWY}, Tosatti-Y.G.Zhang \cite{ToZyg}, Hein-Tosatti \cite{HeTo}, Guo-Song-Weinkove \cite{GSW}, Tian-Z.L.Zhang \cite{TZzl16,TZzl18}, Guo \cite{G}, the second-named author \cite{Zys17, Zys18, Zys19} and Jian \cite{Jian}.

When the Kodaira dimension $\kappa := kod(X)$ of $X$ is in the range $0<\kappa<n$, we let
\begin{equation}\label{semiample}
f:X\to X_{can}\subset \mathbb{CP}^N
\end{equation}
be the semi-ample fibration with connected fibers induced by pluricanonical system of $K_X$. Here $X_{can}$ is a $\kappa$-dimensional irreducible normal projective variety which is called the canonical model of $X$. Let $V\subset X_{can}$ be the singular set of $X_{can}$ consisting of critical values of $f$. We call $X_y=f^{-1}(y)$ is a regular fiber if $y\in X_{can}\setminus V$, and a singular fiber if $y \in V$. Moreover, there exists a rational K\"ahler metric $\chi$ on $\mathbb{CP}^N$ with $f^*\chi\in2\pi c_1(K_X)$. By fundamental works of Song-Tian \cite{ST07, ST12}, the K\"ahler-Ricci flow $\omega(t)$ starting from any initial K\"ahler metric converges in the sense of currents to a generalized K\"ahler-Einstein metric on $X_{can}$. Smooth convergence results were obtained by \cite{Gi} in the case $X$ is a direct product of a complex torus and a K\"ahler manifold with negative first Chern class (also see \cite[Section 6]{SW} for the product elliptic surface case), and by \cite{FZ} for the case of regular torus fibrations (proved using parabolic analogue of \cite{GTZ}). The rationality assumption in \cite{FZ} was later removed in \cite{HeTo}. For general Calabi-Yau fibrations, $C_{loc}^0$-convergence of the metric on the regular part of $X$ was also obtained in \cite{TWY}. It was also proved in \cite{ToZyg} that the Riemann curvature must blow up near any regular fiber which is not a complex torus nor its finite quotient.

In this article, motivated by known results about curvature estimates of long-time K\"ahler-Ricci flow solutions, we study the \emph{local} curvature estimate of the flow \eqref{nkrf}. We first localize Hamilton's definition \cite{Ha93} of infinite-time singularity types of the K\"ahler-Ricci flow:

\begin{defn}[Local infinite-time singularity type]\label{defn_1}
Given a subset $K\subset X$, a long time solution $\omega(t)$ to the K\"ahler-Ricci flow \eqref{nkrf} is of \emph{singularity type \textrm{III} on $K$} if there exists an open neighborhood $U$ of $K$ such that
\begin{equation}\label{defn_ii}
\limsup_{t\to\infty}\left(\sup_{U}|Rm(\omega(t))|_{\omega(t)}\right)<\infty\nonumber;
\end{equation}
otherwise we say the solution is of \emph{singularity type \textrm{IIb} on $K$}.
\end{defn}

For example, if we choose $K=X$ in Definition \ref{defn_1}, then we get the original definition by Hamilton \cite{Ha93} and we simply say the solution $\omega(t)$ is of type III or type IIb. We can also discuss the singularity type of the K\"ahler-Ricci flow at a fixed point $x$ by choosing $K=\{x\}$ in Definition \ref{defn_1}.

\par We are interested in classifying \emph{the singularity type of the K\"ahler-Ricci flow on the fibers $X_y$ of $f$ in \eqref{semiample}}. Thanks to the aforesaid works \cite{FZ,Gi,HeTo,TWY,ToZyg}, the regular fiber case was completely understood: the K\"ahler-Ricci flow is of type \textrm{III} on a regular fiber $X_y$ if and only if $X_y$ is biholomorphic to a finite quotient of a torus. Moreover, in the case that the regular fiber is not a finite quotient of a torus, any open neighborhood of a singular fiber must contain a regular fiber, and so in this case the K\"ahler-Ricci flow on a singular fiber $X_y$ is of type \textrm{IIb}. Therefore, the only open case is the following

\vskip 0.3cm
\begin{itemize}
\item[($\star$)]\emph{$0<kod(X)<n$ ($n\ge3$) and the regular fiber $X_y$ is a finite quotient of a torus and $V\neq\emptyset$}.
\end{itemize}
\vskip 0.3cm

In case ($\star$), when $X_y$ is a singular fiber, certain criterions for the K\"ahler-Ricci flow developing type \textrm{IIb} singularities on $X_y$ have been discovered, which relate the singularity type of the K\"ahler-Ricci flow to certain algebraic properties of $X_y$, see \cite[Proposition 1.4]{ToZyg} and \cite[Theorems 2.1]{Zys19}. In general, the classification of the singularity type of the K\"ahler-Ricci flow on singular fibers is largely open. Along this line, it was conjectured in \cite[Section 1]{ToZyg} (also see \cite[Conjecture 6.7]{To.kawa}) and confirmed in \cite[Theorem 1.4]{Zys17} that the (global) singularity type of the K\"ahler-Ricci flow on $X$ does not depend on the choice of the initial K\"ahler metric. This indicates that the (global) singularity type of the K\"ahler-Ricci flow on $X$ should only depend on the complex structure of $X$.

\par Now suppose $X$ in case ($\star$) admits a type \textrm{II}b solution to the K\"ahler-Ricci flow on $X$, then \cite[Theorem 1.4]{Zys17} implies every solution to the K\"ahler-Ricci flow on $X$ is of type \textrm{II}b, that is, the curvature of every solution to \eqref{nkrf} must blow up on \emph{some} singular fibers. It is then natural to ask:
\begin{ques}
\label{mainq}
Assume the flow is of type \textrm{II}b on $X$.
\begin{itemize}
\item[(1)] For $X$ in case $(\star)$ whether the set of singular fibers on which the Riemann curvature of the K\"ahler-Ricci flow \eqref{nkrf} blows up is independent of the choice of the initial K\"ahler metric? This in its setting may be regarded as a local and strengthened version of a conjecture by Tosatti \cite[Conjecture 6.7]{To.kawa}.
\item[(2)] Furthermore, if the flow is of type \textrm{II}b on a singular or regular fiber $X_y$, what can we say about the blow-up rate of the curvature? E.g.,
\begin{itemize}
\item[(2.1)] does the blow-up rate depend on the choice of the initial K\"ahler metric?
\item[(2.2)] can we have some effective estimates on the blow-up rates?
\end{itemize}
\end{itemize}
\end{ques}

In this paper, we shall study the above questions. We will first prove the following general estimates assuming that both curvature and metrics satisfy certain time-dependent bounds:
\begin{thm}\label{tech.thm}
Let $f:X\to X_{can}$ be the map in \eqref{semiample}, consider a fiber $X_y$ where $y \in X_{can}$. Suppose $\tilde\omega(t)$ and $\omega(t)$ are two solutions of \eqref{nkrf} such that there exist an open neighborhood $U$ of $X_y$ in $X$, and two increasing\footnote{Throughout this article, ``increasing'' means the time-derivative is nonnegative.} differentiable functions $\tau(t), \sigma(t) : [0, \infty) \to [1, \infty)$ such that
\begin{equation}\label{tau}
\sup_{U}|Rm(\tilde\omega(t))|_{\tilde\omega(t)}\le\tau(t) \quad \text{ for any } t \in [0,\infty),
\end{equation}
and, we have on $U\times[0,\infty)$,
\begin{equation}\label{sigma}
\sigma(t)^{-1}\tilde\omega(t)\le\omega(t)\le\sigma(t)\tilde\omega(t).
\end{equation}
Then, there exist an open neighborhood $U'$ of $X_y$ in $X$ with $U'\subset\subset U$ and a constant $A\ge1$ such that for any $t\in[0,\infty)$,
\begin{equation}\label{type_iii.1.1.1}
\sup_{U'}|Rm(\omega(t))|_{\omega(t)}\le A\sigma(t)^4\tau(t).
\end{equation}
\end{thm}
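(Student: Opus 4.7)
The strategy is to adapt the Sherman--Weinkove local Calabi estimate to the coupled K\"ahler--Ricci flow. Introduce the $(1,2)$-tensor
\begin{equation*}
T^{k}_{ij}(t) = \Gamma^{k}_{ij}(\omega(t)) - \Gamma^{k}_{ij}(\tilde\omega(t)),
\end{equation*}
the difference of the two Chern connections. Since K\"ahler Riemann curvature tensors differ by $Rm(\omega) - Rm(\tilde\omega) = \bar\partial T + T\star T$ schematically, the claim \eqref{type_iii.1.1.1} will follow from two local bounds of the form $|T|^{2}_{\omega}\le A_{1}\sigma^{4}\tau$ and $|\nabla T|^{2}_{\omega}\le A_{2}\sigma^{8}\tau^{2}$. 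Fix intermediate open sets $U'\subset\subset U''\subset\subset U$ and a smooth cutoff $\eta$ with $\eta\equiv 1$ on $U'$ and $\operatorname{supp}\eta\subset U''$.

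\textbf{Steps 1--2 (preparatory bounds and evolution of $|T|^{2}$).} Since $\tau$ is increasing, each parabolic cylinder $U''\times[t-c\tau(t)^{-1},t]$ has uniform curvature bound $\tau(t)$ for $\tilde\omega$, so Shi's local higher-derivative estimates for Ricci flow give $|\tilde\nabla^{j}Rm(\tilde\omega(t))|_{\tilde\omega(t)}\le C_{j}\tau(t)^{1+j/2}$ on $U''$ for $j=1,2$. Converting to the $\omega$-norm via \eqref{sigma}, each $\tilde g^{-1}$ costs a factor $\sigma$, so these bounds become $C_{j}\sigma(t)^{?}\tau(t)^{1+j/2}$. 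A direct K\"ahler calculation under the coupled flow then yields
\begin{equation*}
(\partial_{t}-\Delta_{\omega})|T|^{2}_{\omega}\le -\tfrac{1}{2}\bigl(|\nabla T|^{2}_{\omega}+|\bar\nabla T|^{2}_{\omega}\bigr) + C\sigma^{4}\tau\,|T|^{2}_{\omega} + C\sigma^{6}\tau^{3/2},
\end{equation*}
on $U''$, where the curvature terms of $\tilde\omega$ have been absorbed using Step~1 and \eqref{sigma}.

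\textbf{Step 3 (maximum principle).} Apply the parabolic maximum principle to
\begin{equation*}
Q(x,t) = \eta^{2}\,|T|^{2}_{\omega} + K_{0}\sigma(t)\bigl(\operatorname{tr}_{\omega}\tilde\omega + \operatorname{tr}_{\tilde\omega}\omega\bigr),
\end{equation*}
for $K_{0}$ sufficiently large. The Chern--Lu-type inequalities for $\operatorname{tr}_{\omega}\tilde\omega$ and $\operatorname{tr}_{\tilde\omega}\omega$ under the coupled flow contribute negative quadratic gradient terms of the form $-c\sigma^{-2}|\nabla T|^{2}_{\omega}$, which together with the $-|\nabla T|^{2}$ term from Step~2 absorb the cross term $|\nabla(\eta^{2})||T||\nabla T|$ via Cauchy--Schwarz. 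Using \eqref{sigma} to note that $\operatorname{tr}_{\omega}\tilde\omega+\operatorname{tr}_{\tilde\omega}\omega\le 2n\sigma$, a spacetime maximum analysis then yields $|T|^{2}_{\omega}\le A_{1}\sigma^{4}\tau$ on $U''$. Repeating with $|\nabla T|^{2}_{\omega}$ in place of $|T|^{2}_{\omega}$, feeding the previous bound into the new bad terms, bootstraps to $|\nabla T|^{2}_{\omega}\le A_{2}\sigma^{8}\tau^{2}$ on $U'$. Substituting both bounds into the curvature identity proves \eqref{type_iii.1.1.1}.

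\textbf{Main obstacle.} The delicate point is engineering the auxiliary barrier $K_{0}\sigma(\operatorname{tr}_{\omega}\tilde\omega+\operatorname{tr}_{\tilde\omega}\omega)$ and the constant $K_{0}$ so that, at a spacetime maximum of $Q$, the negative $|\nabla T|^{2}$-contributions precisely absorb the cutoff-derivative terms while the leftover zeroth-order residue matches the target rate $\sigma^{4}\tau$. A secondary nuisance is that the time-dependent factors $\sigma(t), \tau(t)$ introduce $\dot\sigma, \dot\tau$-terms into the parabolic inequality when one differentiates $K_{0}\sigma$; the hypothesis that both $\sigma$ and $\tau$ are nondecreasing ensures these additional terms carry the favorable sign and can be discarded. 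The rest of the argument is a careful accounting of $\sigma$-powers in converting between $\omega$- and $\tilde\omega$-norms throughout the evolution inequalities.
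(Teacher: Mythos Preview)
Your overall architecture—localize a Shi estimate for $\tilde\omega$, then a Calabi $C^3$-estimate for $S=|T|^2_\omega$, then upgrade to a curvature bound—matches the paper. However, there is a genuine missing ingredient: you take a generic cutoff $\eta$ and implicitly assume uniform control of $|\partial\eta|^2_{\omega(t)}$ and $|\Delta_{\omega(t)}\eta|$ when absorbing the cross term $|\nabla(\eta^2)|\,|T|\,|\nabla T|$. In this collapsing fibration both $\omega(t)$ and $\tilde\omega(t)$ degenerate in the fiber directions (the volume decays like $e^{-(n-k)t}$), so for a cutoff that varies along fibers $|\partial\eta|^2_{\omega(t)}$ blows up, and the equivalence \eqref{sigma} does not help since $\tilde\omega$ collapses too. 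The paper's essential preliminary step (Lemma~\ref{cutoff}) pulls the cutoff back from the ambient $\mathbb{CP}^N\supset X_{can}$ via $f$, so that $\sqrt{-1}\partial\phi\wedge\bar\partial\phi$ and $\pm\sqrt{-1}\partial\bar\partial\phi$ are dominated by $f^*\chi$; the Song--Tian estimate $\mathrm{tr}_{\omega(t)}(f^*\chi)\le C_0$ then yields $|\partial\phi|^2_{\omega(t)}+|\Delta_{\omega(t)}\phi|\le C$ uniformly in $t$. Without such a fiber-constant cutoff every localized maximum-principle step in your outline fails. The same objection undermines your parabolic-cylinder approach to Shi's estimate: the intrinsic $\tilde\omega(t)$-radius of $U''$ is not controlled as $t\to\infty$, so the usual $1/r$ term in the local Shi bound is useless; the paper instead runs a Shi-type argument (Proposition~\ref{Shi_est}) directly with the good cutoff.

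Two smaller points on execution. First, your barrier $Q=\eta^2|T|^2_\omega+K_0\sigma(\mathrm{tr}_\omega\tilde\omega+\mathrm{tr}_{\tilde\omega}\omega)$ with constant $K_0$ does not close: the trace term contributes only $-K_0S$ after the $\sigma$-factors cancel, which cannot absorb the $C\sigma^2\tau S$ in the evolution of $S$; moreover multiplying the trace by $\sigma$ rather than $\sigma^{-1}$ makes the $\dot\sigma$-contribution a positive term on the right-hand side, contrary to your ``secondary nuisance'' remark. The paper normalizes with decreasing time-weights, taking $\phi^2\sigma^{-4}\tau^{-1}S+C\sigma^{-1}\mathrm{tr}_\omega\tilde\omega$, so that $\dot\sigma,\dot\tau\ge 0$ genuinely give favorable signs. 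Second, for the final step the paper does not separately bound $|\nabla T|^2$; note that in the K\"ahler setting $\overline\nabla_{\bar j}T^l_{ik}=\partial_{\bar j}T^l_{ik}$ exactly, so $|\overline\nabla T|^2_\omega=|Rm(\omega)-Rm(\tilde\omega)|^2_\omega$ with no $T\star T$ correction. The paper exploits this identity inside the evolution of $S$ to get a $-\tfrac12|Rm(\omega)|^2_\omega$ term, and then runs the Sherman--Weinkove quotient $\phi^2\sigma^{-8}\tau^{-2}|Rm(\omega)|^2_\omega/(B-\sigma^{-4}\tau^{-1}S)$ to close directly on $|Rm(\omega)|_\omega$.
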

We point out that in Theorem \ref{tech.thm} the fiber could be a \emph{singular} one.
\par Combining Theorem \ref{tech.thm}, early results in \cite{FZ} and Proposition \ref{equivalence} in Section \ref{sect:MetricEqv}, we have the following estimates assuming only the curvature upper bound: 
\begin{thm}\label{blowuprate}
Let $f:X\to X_{can}$ be the map in \eqref{semiample}, and $\omega(t)$ and $\tilde{\omega}(t)$ two K\"ahler-Ricci flows \eqref{nkrf}. Consider a fiber $X_y$, and suppose there exist an open set $U$ containing $X_y$ and an increasing differentiable function $\tau(t) : [0, \infty) \to [1, \infty)$ such that
\[\sup_U |Rm(\tilde{\omega}(t))|_{\tilde{\omega}(t)} \leq \tau(t).\]
\begin{itemize}
\item[(i)] If $X_y$ is a regular fiber, then there exist an open set $U'$ with $X_y \subset U' \subset\subset U$ and a constant $C\ge1$ such that
\[\sup_{U'} |Rm(\omega(t))|_{\omega(t)} \leq C\tau(t).\]
\item[(ii)] If $X_y$ is a singular fiber, then there exist an open set $U'$ with $X_y \subset U' \subset\subset U$ and a constant $C\ge1$ such that
\[\sup_{U'} |Rm(\omega(t))|_{\omega(t)} \leq e^{C\tau(t)}.\]
\end{itemize}
\end{thm}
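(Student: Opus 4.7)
The strategy is to reduce both cases to Theorem~\ref{tech.thm} by producing, in each case, a time-dependent metric equivalence $\sigma(t)$ between $\omega(t)$ and $\tilde\omega(t)$ on a neighborhood of $X_y$, and then substituting into the bound $A\sigma(t)^4\tau(t)$ supplied by that theorem. The curvature hypothesis $|Rm(\tilde\omega(t))|_{\tilde\omega(t)}\le\tau(t)$ is already given on $U$; the only missing ingredient of Theorem~\ref{tech.thm} is the metric comparison \eqref{sigma}.

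For part (i), since $f$ is proper and the regular locus $X_{can}\setminus V$ is open, I would first shrink $U$ so that every fiber it meets is regular. On such a neighborhood, the work of Fong--Zhang \cite{FZ} together with the Hein--Tosatti refinement \cite{HeTo} (and the semi-flat Calabi--Yau fiber metrics coming from \cite{GTZ}) shows that any K\"ahler-Ricci flow on $X$ is uniformly equivalent to a fixed reference family on a neighborhood of a regular fiber. Comparing $\omega(t)$ and $\tilde\omega(t)$ through this reference yields $\sigma(t)\le C$ independent of $t$, and Theorem~\ref{tech.thm} then delivers $\sup_{U'}|Rm(\omega(t))|_{\omega(t)}\le AC^4\tau(t)$ on a slightly smaller $U'\subset\subset U$, which is the claim.

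For part (ii), the required $\sigma(t)$ is furnished by Proposition~\ref{equivalence}: from the one-sided curvature bound $|Rm(\tilde\omega(t))|_{\tilde\omega(t)}\le\tau(t)$ on $U$ one obtains $\sigma(t)\le e^{C\tau(t)}$ on a possibly shrunken neighborhood. Substituting into \eqref{type_iii.1.1.1},
\[\sup_{U'}|Rm(\omega(t))|_{\omega(t)}\le A\,\tau(t)\,e^{4C\tau(t)}\le e^{C'\tau(t)},\]
where the last inequality absorbs the polynomial factor $A\tau(t)$ into the exponential by slightly enlarging the constant, using that $\tau(t)\ge 1$ and is nondecreasing.

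The main obstacle is Proposition~\ref{equivalence} itself. I anticipate its proof rests on a localized parabolic Chern--Lu (Schwarz-lemma) computation applied to $\log\operatorname{tr}_{\tilde\omega}\omega$ on $U$, together with its symmetric counterpart for $\log\operatorname{tr}_\omega\tilde\omega$. The curvature of $\tilde\omega$ then enters the resulting differential inequality as a source term of size $\tau(t)$, and a cutoff function supported inside $U$ is required to apply the maximum principle away from $\partial U$. The delicate points are the cutoff error terms, which may involve lower bounds on the possibly degenerating reference metric $\tilde\omega$, and arranging the time integration so that the dependence on $\tau(t)$ comes out exactly exponential (rather than, for instance, involving $\int_0^t\tau(s)\,ds$); the latter should be possible using the monotonicity of $\tau$.
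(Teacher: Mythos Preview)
Your proposal is correct and follows essentially the same route as the paper: both parts are reduced to Theorem~\ref{tech.thm}, with $\sigma$ taken to be a constant in the regular-fiber case (via the metric equivalence of \cite{FZ}) and $\sigma=e^{A\tau}$ in the singular-fiber case (via Proposition~\ref{equivalence}), after which the bound $A\sigma^4\tau$ gives exactly the stated conclusions. Regarding your worries about Proposition~\ref{equivalence}, the paper handles the cutoff issue by pulling $\phi$ back from the base (Lemma~\ref{cutoff}), so that $|\partial\phi|_{\omega(t)}^2$ and $|\Delta_{\omega(t)}\phi|$ are controlled by $tr_{\omega(t)}(f^*\chi)$ alone, and only one Schwarz-type inequality is actually needed---the reverse metric bound comes from the global volume-form equivalence~\eqref{vol_bd}.
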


\begin{rem}
Though our conclusions in Theorems \ref{tech.thm} and \ref{blowuprate} are local, we should mention that the involved constants $A$ and $C$ depend on some global properties of the initial metrics $\omega_0$ and $\tilde\omega_0$. Indeed, our arguments are heavily based on Song-Tian's uniform $C^0$-estimates on the K\"ahler potentials and the parabolic Schwarz lemma along the K\"ahler-Ricci flow (see \cite{ST07,ST12,ST16}), in which the uniform estimates depend on global properties of the initial metric $\omega_0$, including its $C^0$-norm, integrals $\int_X\omega_0^n$, $\int_X\omega^{n-k}\wedge f^*\chi^k$, and the curvature bound of $\chi$. Therefore, the constants $A$ and $C$ also depend on these informations of initial metrics and $\chi$.
\end{rem}
Both conclusions in Theorem \ref{blowuprate} can be seen as partial results for Question \ref{mainq}(2), while part (i) solves Question \ref{mainq} (2.1) in the regular fiber case. Moreover, as a corollary of Theorem \ref{blowuprate}, we shall answer Question \ref{mainq} (1) in full generality:

\begin{thm}\label{ind.1}
Let $f:X\to X_{can}$ be the map in \eqref{semiample}. The (local) singularity type (i.e. Type III or IIb) of the K\"ahler-Ricci flow \eqref{nkrf} on any fixed fiber $X_{y}$ does not depend on the choice of the initial K\"ahler metric.

Consequently, if let $W_{\textup{II}}(\omega(t))$ be the set of point $y\in X_{can}$ such that $\omega(t)$ is of type \textrm{II}b on $X_y$, then $W_{\textup{II}}(\omega(t))$ and hence $f^{-1}(W_{\textup{II}}(\omega(t)))$ are invariant for different K\"ahler-Ricci flow solutions $\omega(t)$ on $X$. We may simply denote $W_{\textup{II}} := W_{\textup{II}}(\omega(t))$.
\end{thm}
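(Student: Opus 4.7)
The plan is to derive Theorem \ref{ind.1} as an immediate corollary of Theorem \ref{blowuprate}. Given two solutions $\omega(t)$ and $\tilde{\omega}(t)$ to \eqref{nkrf} starting from possibly different initial K\"ahler metrics on $X$, and a fiber $X_y$, it suffices by symmetry (swapping the roles of $\omega$ and $\tilde\omega$) to show that if $\tilde{\omega}(t)$ is of type \textrm{III} on $X_y$, then so is $\omega(t)$.

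By Definition \ref{defn_1}, the hypothesis that $\tilde{\omega}(t)$ is of type \textrm{III} on $X_y$ provides an open neighborhood $U$ of $X_y$ and a constant $C_0 \geq 1$ such that
\[
\sup_{U}|Rm(\tilde{\omega}(t))|_{\tilde{\omega}(t)} \leq C_0 \quad \text{for all } t \in [0,\infty).
\]
Applying Theorem \ref{blowuprate} with the constant choice $\tau(t) \equiv C_0$ (which trivially satisfies the required non-decreasing, differentiable, and $[1,\infty)$-valued conditions), we obtain a smaller open neighborhood $U' \subset\subset U$ of $X_y$ and a constant $C \geq 1$ so that $\sup_{U'}|Rm(\omega(t))|_{\omega(t)} \leq CC_0$ when $X_y$ is a regular fiber, or $\sup_{U'}|Rm(\omega(t))|_{\omega(t)} \leq e^{CC_0}$ when $X_y$ is a singular fiber. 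In either case, $\omega(t)$ is of type \textrm{III} on $X_y$, which completes the required implication.

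For the second assertion, the set $W_{\textup{II}}(\omega(t)) = \{y \in X_{can} : \omega(t) \text{ is of type \textrm{IIb} on } X_y\}$ is determined pointwise by the local singularity types of the flow on the fibers of $f$, so the first part of the theorem immediately forces $W_{\textup{II}}(\omega(t))$, and hence $f^{-1}(W_{\textup{II}}(\omega(t)))$, to be independent of the choice of the initial K\"ahler metric. Since Theorem \ref{blowuprate} already absorbs all the analytic work, no genuine obstacle remains at this stage; the only minor point is checking that a constant $\tau$ is admissible in Theorem \ref{blowuprate}, which is clear.
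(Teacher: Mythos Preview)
Your proof is correct and follows essentially the same approach as the paper: the paper also takes $\tau$ to be a constant and deduces the result, though it unpacks Theorem \ref{blowuprate} into its ingredients (Proposition \ref{equivalence} followed by Theorem \ref{tech.thm} with constant $\sigma$) rather than citing Theorem \ref{blowuprate} directly. The content is the same.
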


\begin{rem}
In \cite{FZ}, it was proved that $W_{\textup{II}} \subset V$ in the case $(\star)$. If in particular $X$ is a minimal elliptic K\"ahler surface, by \cite[Theorem 1.6]{ToZyg} the set $W_{\textup{II}}$ exactly consists of the critical values of $f$ over which the singular fibers are not of Kodaira type $mI_0$. In contrast in \cite{ToZyg} where the regular fibers are not complex tori nor its finite quotients, one has $(X_{can} \backslash V) \subset W_{\textup{II}}$; moreover, as we mentioned before, in this case any open neighborhood of a singular fiber must contain a regular fiber, and so we have $W_{\textup{II}}=X_{can}$. 
\end{rem}

\begin{rem}
Note that \cite[Theorem 1.4]{Zys17} addressed the special case $W_{\textrm{II}}(\omega(t)) = \emptyset$ for some solution $\omega(t)$. Therefore, our Theorem \ref{ind.1} is a substantial improvement of \cite[Theorem 1.4]{Zys17}.
\end{rem}

\begin{rem}
Theorem \ref{ind.1} indicates a general phenomenon that the singularity type of the K\"ahler-Ricci flow on a singular fiber should depend only on the properties of the singular fiber itself, and hence provides an analytic viewpoint to classify these singular fibers. Naturally, our long term goal will be to find precise relations between singularity types of K\"ahler-Ricci flows and the analytic/algebraic properties of the singular fibers.
\end{rem}

\par To state our next result, consider the regular fiber of $f:X\to X_{can}$ is not biholomorphic to a finite quotient of a torus, then it was proved in \cite{ToZyg} that the Riemann curvature must blow up near such a fiber. In this case our Theorem \ref{blowuprate}(i) somehow says the curvature blow-up rate near such a fiber should not depend on the choice of the initial metric. The following theorem further determines the exact blow-up order of the Riemann curvature, solving Question \ref{mainq} (2) in the regular fiber case.

\begin{thm}\label{thm_Rm}
Assume $f:X\to X_{can}$ as in \eqref{semiample} and the regular fiber is not biholomorphic to a finite quotient of a torus. Then for an arbitrary $U\subset\subset X\setminus f^{-1}(V)$ and an arbitrary solution $\omega(t)$ to the K\"ahler-Ricci flow \eqref{nkrf}, there is a constant $A\ge1$ such that for any $t\ge0$,
\begin{equation}\label{thm_Rm_e^t}
A^{-1}e^{t}\le\sup_{U}|Rm(\omega(t))|_{\omega(t)}\le Ae^{t}.
\end{equation}
\end{thm}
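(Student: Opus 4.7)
Our plan has two components, a lower bound and an upper bound of the Riemann curvature, each attacked by a different method.

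\textbf{Upper bound.} By Theorem \ref{blowuprate}(i), it suffices to exhibit a single solution $\tilde\omega(t)$ of \eqref{nkrf} satisfying $\sup_{U}|Rm(\tilde\omega(t))|_{\tilde\omega(t)}\le Ce^t$ on any $U\subset\subset X\setminus f^{-1}(V)$; taking $\tau(t)=Ce^t$ in Theorem \ref{blowuprate}(i) then yields the desired upper bound for every K\"ahler-Ricci flow on $X$, after a mild shrinkage of $U$. We would produce such a $\tilde\omega(t)$ via a \emph{semi-Ricci-flat reference ansatz}: take the initial metric $\omega_0=\chi+\omega_{SRF}$, where $\chi$ is (the pullback to $X$ of) a fixed smooth representative of the semi-ample class on $X_{can}$, and $\omega_{SRF}$ is a $d$-closed $(1,1)$-form on a neighborhood of the regular set whose restriction to each regular fiber $X_y$ is the unique Ricci-flat Kähler metric in $[\omega_0|_{X_y}]$ (existing by Yau's theorem). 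A direct computation for the reference family $\hat\omega(t):=\chi+e^{-t}\omega_{SRF}$ shows $|Rm(\hat\omega(t))|_{\hat\omega(t)}\le Ce^t$ on the regular set, the dominant contribution coming from the fiber direction where the metric has size $e^{-t}$ and curvature consequently rescales by $e^t$. Combining Song-Tian's generalized K\"ahler-Einstein convergence with the higher-order convergence results on the regular set from \cite{TWY, HeTo, Zys17, Zys19, TZzl18} and parabolic Schauder estimates, this curvature bound transfers from $\hat\omega(t)$ to $\tilde\omega(t)$.

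\textbf{Lower bound.} We apply a Chern-Weil argument on a single regular fiber. Fix any regular fiber $X_y\subset U$ of complex dimension $m:=n-\kappa$. Since $K_X|_{X_y}$ is trivial, $X_y$ is a compact Calabi-Yau manifold, so $c_1(X_y)=0$, and the cohomology evolution under \eqref{nkrf} restricted to $X_y$ reads $[\omega(t)|_{X_y}]=e^{-t}[\omega(0)|_{X_y}]$. Since $X_y$ is not a finite quotient of a complex torus, its unique Ricci-flat Kähler metric in any class is non-flat, which (via Chern-Weil applied to that Ricci-flat metric) implies
\[
\int_{X_y}c_2(X_y)\wedge[\omega(0)|_{X_y}]^{m-2}\neq 0.
\]
Hence $\int_{X_y}c_2(X_y)\wedge\omega(t)|_{X_y}^{m-2}=c\,e^{-(m-2)t}$ with $c\neq 0$, while $\int_{X_y}\omega(t)|_{X_y}^m=c'e^{-mt}$. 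Combined with the pointwise estimate $|c_2(\omega)\wedge\omega^{m-2}|\le C|Rm(\omega)|_{\omega}^2\,\omega^m$, this yields $\sup_{X_y}|Rm(\omega(t))|_{\omega(t)}^2\ge C''e^{2t}$, so $\sup_{U}|Rm(\omega(t))|_{\omega(t)}\ge A^{-1}e^t$.

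\textbf{Main obstacle.} The lower bound is essentially topological once the cohomological evolution $[\omega(t)|_{X_y}]=e^{-t}[\omega(0)|_{X_y}]$ is noted, so the crux lies in the upper bound. In the torus-fiber setting, smooth convergence of the rescaled fiber metric $e^t\omega(t)|_{X_y}$ to a flat metric was established in \cite{FZ, HeTo, GTZ}; the analogue here, where the limiting Ricci-flat fiber metric carries non-trivial curvature, requires more delicate parabolic regularity on the regular set, and together with the reference-metric curvature computation above constitutes the main technical step.
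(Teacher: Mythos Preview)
Your upper-bound strategy has a genuine gap at the ``transfer'' step. Reducing to a single solution via Theorem~\ref{blowuprate}(i) is legitimate, and the curvature of the reference family $\hat\omega(t)=\chi+e^{-t}\omega_{SRF}$ is indeed $O(e^t)$. But $\hat\omega(t)$ is not a K\"ahler--Ricci flow, so neither Theorem~\ref{tech.thm} nor Theorem~\ref{blowuprate} applies to pass from $\hat\omega$ to $\tilde\omega$. The references you invoke do not fill this: \cite{TWY} gives only $C^0_{loc}$-convergence of the metric on the regular part (the fiberwise smooth convergence there is intrinsic to the fiber, not ambient), \cite{HeTo} treats torus fibers, and \cite{Zys17,Zys19,TZzl18} do not supply ambient higher-order estimates of the required strength. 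An $O(e^t)$ ambient curvature bound is exactly the content of the theorem, so appealing to ``higher-order convergence and parabolic Schauder'' here is circular---you yourself identify this as ``the main technical step'' but then do not carry it out.

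The paper avoids this circularity by changing the reference metric. On a local chart one compares $\omega(t)$ not with $\hat\omega(t)$ but with the \emph{flat} family $\omega_{E,t}=\omega_E^{(k)}+e^{-t}\omega_E^{(n-k)}$; the uniform equivalence $C^{-1}\omega_{E,t}\le\omega(t)\le C\omega_{E,t}$ is available from \cite{FZ}. Flatness of $\tilde\omega=\omega_{E,t}$ kills the reference-curvature terms in the Calabi and curvature evolution identities, so a direct local maximum-principle argument (Lemma~\ref{lem_7.1}, applied with $\alpha(t)=e^t$ and $\beta$ constant) yields $|Rm(\omega(t))|_{\omega(t)}\le Ce^t$ for \emph{every} solution, without any convergence input and without the detour through Theorem~\ref{blowuprate}(i).

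Your lower-bound argument also needs repair. The $c_2$ integral requires an \emph{entire} regular fiber $X_y\subset U$, which an arbitrary $U\subset\subset X\setminus f^{-1}(V)$ need not contain; and you conflate the intrinsic curvature of $\omega(t)|_{X_y}$ with the ambient $|Rm(\omega(t))|_{\omega(t)}$, which differ by second-fundamental-form terms via the Gauss equation. The paper instead invokes the local argument of \cite{ToZyg}: the Ricci-flat metric on $X_y$ cannot be flat on the open set $U\cap X_y$ (else it would be flat on all of $X_y$), and this combined with the fiberwise curvature analysis in \cite{ToZyg} produces a point in $U$ with ambient curvature $\gtrsim e^t$.
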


We should mention that in \eqref{thm_Rm_e^t} the lower bound is essentially contained in \cite{ToZyg}, and so our contribution is the upper bound, see Section \ref{sect:e^t} for more details.
\par An immediate consequence of Theorem \ref{thm_Rm}:
\begin{cor}
Assume $f:X\to X_{can}$ as in \eqref{semiample} and the regular fiber is not biholomorphic to a finite quotient of a torus. Then for an arbitrary $U\subset\subset X\setminus f^{-1}(V)$ and an arbitrary long-time solution $\overline\omega=\overline\omega(t)$ to the unnormalized K\"ahler-Ricci flow $\partial_t\overline\omega=-Ric(\overline\omega)$, there is a constant $A\ge1$ such that for any $t\ge0$,
\begin{equation}
A^{-1}\le\sup_{U}|Rm(\overline\omega(t))|_{\overline\omega(t)}\le A.
\end{equation}
\end{cor}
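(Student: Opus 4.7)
The plan is to reduce the corollary directly to Theorem \ref{thm_Rm} via the standard space-time change of variables relating the unnormalized and normalized K\"ahler-Ricci flows. Given a long-time solution $\overline{\omega}(s)$ to $\partial_s\overline{\omega}=-Ric(\overline{\omega})$ on $s\in[0,\infty)$, I would define
\[
\omega(t) := e^{-t}\,\overline{\omega}(s(t)), \qquad s(t) := e^{t}-1, \qquad t\in[0,\infty).
\]
A direct differentiation, using the fact that the Ricci tensor of a K\"ahler metric is invariant under constant rescaling ($Ric(c\overline{\omega})=Ric(\overline{\omega})$ for $c>0$), gives
\[
\partial_t \omega = -\omega + e^{-t}s'(t)\,\partial_s\overline{\omega} = -\omega - e^{-t}s'(t)\,Ric(\overline{\omega}) = -\omega - Ric(\omega),
\]
since the choice $s'(t)=e^{t}$ makes $e^{-t}s'(t)=1$. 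Thus $\omega(t)$ is a long-time solution to the normalized K\"ahler-Ricci flow \eqref{nkrf} on $X$, starting from the K\"ahler metric $\overline{\omega}(0)$.

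Next I would apply Theorem \ref{thm_Rm} to $\omega(t)$ on the chosen open set $U\subset\subset X\setminus f^{-1}(V)$, obtaining a constant $A\ge 1$ with
\[
A^{-1}e^{t} \leq \sup_{U}|Rm(\omega(t))|_{\omega(t)} \leq A\,e^{t} \quad\text{for all } t\ge 0.
\]
The final step uses the standard scaling law for the Riemann curvature norm: if $\tilde g = c\,g$ for a constant $c>0$, then $|Rm(\tilde g)|_{\tilde g}=c^{-1}|Rm(g)|_{g}$. Applied to $\omega(t)=e^{-t}\overline{\omega}(s(t))$, this yields
\[
|Rm(\omega(t))|_{\omega(t)} = e^{t}\,|Rm(\overline{\omega}(s(t)))|_{\overline{\omega}(s(t))}.
\]
Substituting into the inequality above and dividing through by $e^{t}$ gives
\[
A^{-1} \leq \sup_{U}|Rm(\overline{\omega}(s(t)))|_{\overline{\omega}(s(t))} \leq A \quad\text{for all } t\ge 0,
\]
and since the map $t\mapsto s(t)=e^{t}-1$ is a bijection of $[0,\infty)$ onto itself, re-labelling $s$ as the time variable gives the claimed bound for all $s\in[0,\infty)$.

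There is no real obstacle here: the content is entirely in Theorem \ref{thm_Rm}, and the corollary is a bookkeeping translation. The only point requiring any care is to verify that the time-reparametrization $s(t)=e^{t}-1$ is indeed surjective onto $[0,\infty)$, so that the bound for $\omega(t)$ on all of $[0,\infty)$ transfers to a bound for $\overline{\omega}(s)$ on all of $[0,\infty)$; this is immediate.
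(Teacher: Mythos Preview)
Your proposal is correct and matches the paper's approach: the paper simply records the corollary as ``an immediate consequence of Theorem \ref{thm_Rm}'' without writing out a proof, and your argument supplies exactly the standard rescaling/time-reparametrization between the unnormalized and normalized flows that makes the deduction transparent.
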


\begin{rem}
A type \textrm{III} solution to the K\"ahler-Ricci flow \eqref{nkrf} can be further divided into two different types \cite{Ha93}. Precisely, given a  long time solution $\omega(t)$ to the K\"ahler-Ricci flow \eqref{nkrf} on $X$ and a subset $K\subset X$. Assume $\omega(t)$ is of type \textrm{III} on $K$. We say $\omega(t)$ is of type \textrm{III}(a) on $K$ if for any open neighborhood $U$ of $K$ such that $\limsup_{t\to\infty}\sup_U|Rm(\omega(t))|_{\omega(t)}>0$, and of type \textrm{III}(b) if there is an open neighborhood $U$ of $K$ such that $\limsup_{t\to\infty}\sup_U|Rm(\omega(t))|_{\omega(t)}=0$. Assume $f:X\to X_{can}$ as in \eqref{semiample} and the regular fiber is biholomorphic to a finite quotient of a torus. It is easy to see that the K\"ahler-Ricci flow won't be of type \textrm{III}(b) at any given point. In fact, if there exist some $x\in X$ and some open neighborhood $U$ of $x$ such that $\sup_{U}|Rm(\omega(t))|_{\omega(t)}\to0$, then we must have, for sufficiently large $t$, $\partial_t\omega(t)\le-\frac12\omega(t)$ and so $\omega(t)\le Ce^{-\frac12t}\cdot\omega_0$ on $U$, which, combining the fact that $f(U)\cap(X_{can}\setminus V)$ is an open non-empty set, contradicts to Song-Tian's result that $\omega(t)\ge C^{-1}f^*\chi$ on $X\times[0,\infty)$. Therefore, we have some constant $C=C(U)$ such that for any $t\ge0$, $\sup_{U}|Rm(\omega(t))|_{\omega(t)}\ge C^{-1}$.
Together with our Theorem \ref{thm_Rm}, the infinite-time singularity type of the K\"ahler-Ricci flow at any regular point is understood completely.
\end{rem}

\par Our proofs are achieved by maximum principle arguments and do not involve any convergence results of the K\"ahler-Ricci flow. An important ingredient is the existence of good cut-off functions that are defined locally around each singular/regular fiber.

The structure of this paper is as follows: in Section \ref{sect:cutoff} we constructe some good cut-off functions that will be used to localize our curvature estimates near each fiber. In Sections \ref{sect:Shi} and \ref{sect:Calabi}, we use the cut-off functions constructed to derive local Shi's and Calabi's estimates, then in Section \ref{Rm.Bd} we give the proof of Theorem \ref{tech.thm} by estimating the Riemann curvature locally near a fiber. In Section \ref{sect:MetricEqv}, we derive an important result about local uniform equivalence between two K\"ahler-Ricci flow solutions when the Riemann curvature of one of them is given to be uniformly bounded locally around a fiber. Proofs of above-mentioned theorems are given at the end of this section. Finally in Section \ref{sect:e^t}, we prove Theorems \ref{thm_Rm}; we also give a curvature blow-up rate estimate in terms of the existence of a K\"ahler metric with semi-negative holomorphic sectional curvature.

\section{Cut-off functions near fibers}
\label{sect:cutoff}
\par Recall results of Song-Tian \cite{ST07,ST12,ST16} that, for any solution $\omega(t)$ there exists a constant $C_0\ge1$ such that on $X\times[0,\infty)$,
\begin{equation}\label{lower_bd}
tr_{\omega(t)}(f^*\chi)\le C_0.
\end{equation}
and
\begin{equation}\label{vol_bd}
C_0^{-1}e^{-(n-k)t}\omega_0^n\le\omega(t)^n\le C_0e^{-(n-k)t}\omega_0^n.
\end{equation}
For each fiber $X_y$ (singular or regular), we are going to construct some nice smooth cutoff functions  on $X$ with compact support containing $X_y$ such that they satisfy some desirable bounds according to the estimates \eqref{lower_bd} and \eqref{vol_bd}.

\begin{lem}
\label{cutoff}
For any fiber $X_y$ and an open neighborhood $U$ of $X_y$ in $X$, there exists an open neighborhood $U'$ of $X_y$ in $X$ with $U'\subset\subset U$ and a smooth function $\phi : X \to [0,1]$ such that $\phi$ is compactly supported on $U$, $\phi \equiv 1$ on $U'$, and for each K\"ahler-Ricci flow $\omega(t)$ there exists $C \ge 1$ depending on the initial metric $\omega_0$ and $U$ with
\[\sup_{X \times [0,\infty)}(|\partial\phi|^2_{\omega(t)} + |\Delta_{\omega(t)}\phi|) \leq C.\] 
\end{lem}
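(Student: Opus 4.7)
The plan is to construct $\phi$ as the pullback $f^{*}\tilde\psi$ of a smooth compactly supported cutoff function $\tilde\psi$ defined on the ambient projective space $\mathbb{CP}^N$. Such a $\phi$ is ``basic'' in the sense of being constant along fibers of $f$, so its $\partial$ and $\partial\bar\partial$ derivatives are captured by $\chi$-norms on $\mathbb{CP}^N$, and hence their $\omega(t)$-traces are automatically controlled by the uniform Song--Tian bound \eqref{lower_bd}.

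First I would set up the geometry. Since $X$ is compact and $f$ is continuous, an elementary tube-lemma argument produces an open neighborhood $U''$ of $y$ in $\mathbb{CP}^N$ with $f^{-1}(U'')\subset U$: the compact set $f(X\setminus U)\subset \mathbb{CP}^N$ does not contain $y$, so a small enough $U''$ is disjoint from it. Shrink further to pick $U'''\ni y$ with $\overline{U'''}\subset U''$, and fix a smooth $\tilde\psi : \mathbb{CP}^N \to [0,1]$ supported in $U''$ with $\tilde\psi \equiv 1$ on $U'''$. Setting $\phi := \tilde\psi \circ f$ and $U':=f^{-1}(U''')$, we obtain $\phi\in C^\infty(X,[0,1])$, $\phi\equiv 1$ on $U'\supset X_y$, $\phi$ compactly supported in $U$, and $U'\subset\subset U$.

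Next I would estimate the derivatives. Because the component functions $f^i$ are holomorphic, in local coordinates on $X$ and $\mathbb{CP}^N$ one has
\[\partial_\alpha\phi = (\tilde\psi_i \circ f)\, f^i_\alpha, \qquad \partial_\alpha\partial_{\bar\beta}\phi = (\tilde\psi_{i\bar j} \circ f)\, f^i_\alpha\, \overline{f^j_\beta},\]
the second identity relying on the vanishing $\partial_{\bar\beta} f^i_\alpha = 0$. By compactness of $\mathbb{CP}^N$ there is a constant $K=K(\tilde\psi,\chi)\ge 1$ such that the pointwise Hermitian inequalities $\sqrt{-1}\,\partial\tilde\psi\wedge\bar\partial\tilde\psi \le K\chi$ and $-K\chi \le \sqrt{-1}\,\partial\bar\partial\tilde\psi \le K\chi$ hold on $\mathbb{CP}^N$. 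Pulling these back by $f$ and tracing against $\omega(t)$ yields
\[|\partial\phi|^2_{\omega(t)} \le K\, tr_{\omega(t)}(f^*\chi), \qquad |\Delta_{\omega(t)}\phi| \le K\, tr_{\omega(t)}(f^*\chi),\]
which by \eqref{lower_bd} are both bounded by $KC_0$. Taking $C:=KC_0$ completes the argument.

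There is no serious obstacle once one commits to the pullback device. The crucial cancellation is the vanishing of $\partial_{\bar\beta} f^i_\alpha$, which makes $\partial\bar\partial\phi$ expressible purely in terms of $\tilde\psi_{i\bar j}\circ f$ times horizontal data and therefore controllable by $f^*\chi$. One should not expect the bound to hold for a generic cutoff function on $X$, since $\omega(t)$ collapses in fiber directions and gradients of functions varying along fibers would blow up in $t$; the pullback construction precisely avoids the collapsing directions, which is why the estimate becomes uniform in $t$.
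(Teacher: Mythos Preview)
Your proposal is correct and follows essentially the same approach as the paper: both construct $\phi$ as the pullback via $f$ of a smooth cutoff function on the ambient $\mathbb{CP}^N$, bound $\sqrt{-1}\partial\tilde\psi\wedge\bar\partial\tilde\psi$ and $\pm\sqrt{-1}\partial\bar\partial\tilde\psi$ by a multiple of $\chi$, and then conclude using the Song--Tian estimate $tr_{\omega(t)}(f^*\chi)\le C_0$. Your tube-lemma argument to produce the neighborhood in $\mathbb{CP}^N$ is in fact slightly cleaner than the paper's, which records the local polynomial description of $X_{can}$ without actually using it.
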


\begin{proof}
As an irreducible normal projective variety, $X_{can}$ is a closed subset of $\mathbb{CP}^N$ with the induced topology. Then by compactness of $X$ and $X_y$ there is a proper open subset $\tilde U$ in $X_{can}$ with $y\in\tilde U$ and $f^{-1}(\tilde U)\subset\subset U$. Since $X_{can}$ is a subvariety of $\mathbb{CP}^N$, for the given $y\in\tilde U\subset X_{can}$ we can fix a \emph{sufficiently small} local chart $(\Omega,w^1,...,w^N)$ in $\mathbb{CP}^N$ centered at $y$ such that $X_{can}\cap\Omega\subset\subset\tilde U$. In particular, $f^{-1}(\Omega)=f^{-1}(X_{can}\cap\Omega)\subset\subset U$. We may assume $\Omega=\{(w^1,...,w^N)\in\mathbb C^N||w^1|^2+...|w^N|^2<1\}$. Set $\Omega':=\{(w^1,...,w^N)\in\mathbb C^N||w^1|^2+...|w^N|^2<\frac{1}{m_1^2}\}$ for a sufficiently large number $m_1>1$ such that $X_{can}\cap\Omega'\subset\subset X_{can}\cap\Omega$, and hence $f^{-1}(\Omega')\subset\subset f^{-1}(\Omega)$. Fix a smooth cutoff function $\psi$ on $\mathbb{CP}^N$ which compactly supports on $\Omega$ and identically equals to $1$ on $\Omega'$. There exists a constant $A_0\ge1$ such that on $\mathbb{CP}^N$,
$$\sqrt{-1}\partial\psi\wedge\bar\partial\psi\le A_0\chi.$$
and
$$-A_0\chi\le\sqrt{-1}\partial\bar\partial\psi\le A_0\chi.$$
Then we define a smooth function $\phi:=f^*\psi$ on $X$, which compactly supports on $f^{-1}(\Omega)$ and identically equals to $1$ on $f^{-1}(\Omega')$. Using the above two inequalities and \eqref{lower_bd} gives
\begin{align}
|\partial\phi|_{\omega(t)}^2&=tr_{\omega(t)}(\sqrt{-1}\partial\phi\wedge\bar\partial\phi)\nonumber\\
&=tr_{\omega(t)}(f^*(\sqrt{-1}\partial\psi\wedge\bar\partial\psi))\nonumber\\
&\le A_0tr_{\omega(t)}(f^*\chi)\nonumber\\
&\le A_0C_0\nonumber,
\end{align}

\begin{align}
\Delta_{\omega(t)}\phi&=tr_{\omega(t)}(\sqrt{-1}\partial\bar\partial\phi)\nonumber\\
&=tr_{\omega(t)}(f^*(\sqrt{-1}\partial\bar\partial\psi))\nonumber\\
&\le A_0tr_{\omega(t)}(f^*\chi)\nonumber\\
&\le A_0C_0\nonumber,
\end{align}
and similarly,
\begin{align}
\Delta_{\omega(t)}\phi\ge-A_0C_0\nonumber.
\end{align}
In conclusion, on $X\times[0,\infty)$,
\begin{align}\label{ineq_5}
|\partial\phi|_{\omega(t)}^2\ ,\ |\Delta_{\omega(t)}\phi|\le A_0C_0
\end{align}
as desired. 
\end{proof}

\begin{rem}
We remark that when $y$ is a smooth point in $X_{can}$, we can easily choose a cutoff function by using local chart in $X_{can}$ around $y$. However, when $y$ is a singular point in $X_{can}$, $X_{can}$ is no longer a smooth manifold near $y$ and hence it may be unclear how to find a function on $X_{can}$ which is ``smooth" near $y$. A key point in our above lemma is that, using the ambient manifold $\mathbb{CP}^N$, we can still construct good cutoff function near a singular fiber, which is crucial for the later discussions, as the curvature behaviors near singular fibers will be our main interest.
\end{rem}

\section{Local Shi's derivative estimates near fibers}
\label{sect:Shi}

As a preparation for the next section, we prove a local Shi's estimate by modifying \cite[Section 4]{ShW} on an open neighborhood of a fiber.

\begin{prop}\label{Shi_est}
Let $f:X\to X_{can}$ be the map in \eqref{semiample}. Fix an arbitrary $y\in V$. Assume there exist a solution $\tilde\omega(t)$ of \eqref{nkrf}, an open neighborhood $U$ of $X_y$ in $X$, and an increasing differentiable function $\tau(t) : [0, \infty) \to [1, \infty)$ such that
\begin{equation}\label{type_iii.1.1}
\sup_{U}|Rm(\tilde\omega(t))|_{\tilde\omega(t)}\le\tau(t).
\end{equation}
Then, we have an open neighborhood $U'$ of $X_y$ in $X$ with $U'\subset\subset U$ and a constant $A\ge1$ such that
\begin{equation}\label{Shi}
|\nabla Rm(\tilde\omega)|_{\tilde\omega}\le A\cdot \tau^{\frac32}
\end{equation}
on $U'\times[0,\infty)$. Here $\nabla$ denote the \emph{real} covariant derivative with respect to $\tilde\omega$.
\end{prop}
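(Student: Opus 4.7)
The plan is to adapt the standard Bernstein-Shi derivative estimate for the K\"ahler-Ricci flow to this local setting by combining a Hamilton-type test function with the cutoff from Lemma \ref{cutoff}. First I would fix a slightly smaller open neighborhood $U'' \subset\subset U$ of $X_y$ and apply Lemma \ref{cutoff} to produce a smooth cutoff $\phi : X \to [0,1]$ supported in $U''$, identically $1$ on an intermediate neighborhood $U' \subset\subset U''$, with $|\partial \phi|_{\tilde\omega(t)}^{2} + |\Delta_{\tilde\omega(t)}\phi| \le C$ uniformly in $t \ge 0$. All computations below are with respect to $\tilde\omega(t)$, which we abbreviate simply as $\tilde\omega$.

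Next I would work with the auxiliary quantity
\[
Q \;:=\; \phi^{2}\,\frac{\bigl(K\tau(t)^{2} + |Rm(\tilde\omega)|_{\tilde\omega}^{2}\bigr)\,|\nabla Rm(\tilde\omega)|_{\tilde\omega}^{2}}{\tau(t)^{5}},
\]
where $K \gg 1$ is to be chosen. Since $|Rm|_{\tilde\omega}^{2} \le \tau^{2}$ on $U$ by \eqref{type_iii.1.1}, the weight $(K\tau^{2} + |Rm|^{2})/\tau^{5}$ is pointwise comparable to $\tau^{-3}$, so a uniform upper bound on $Q$ on $U \times [0,\infty)$ is equivalent to \eqref{Shi} on $U'$. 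Using the standard evolution identities along the normalized K\"ahler-Ricci flow,
\[
(\partial_{t} - \Delta)|Rm|^{2} \le -2|\nabla Rm|^{2} - 2|\bar\nabla Rm|^{2} + C|Rm|^{3} - 2|Rm|^{2},
\]
\[
(\partial_{t} - \Delta)|\nabla Rm|^{2} \le -2|\nabla^{2}Rm|^{2} - 2|\bar\nabla\nabla Rm|^{2} + C|Rm|\,|\nabla Rm|^{2} - 3|\nabla Rm|^{2},
\]
the heat operator applied to $Q$ produces, as its decisive good term, the negative contribution $-c\,\phi^{2}|\nabla Rm|^{4}/\tau^{5}$ coming from multiplying the $-2|\nabla Rm|^{2}$ in the first identity by the $|\nabla Rm|^{2}$-factor in $Q$. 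The maximum-principle step is then standard: fix $T>0$ and assume $Q$ attains its maximum on $U \times [0,T]$ at an interior point $(x_{0},t_{0})$. At this point I would use $\nabla Q = 0$ to rewrite cross terms of the form $\langle \nabla \phi^{2}, \nabla|\nabla Rm|^{2}\rangle$ and $\langle \nabla|Rm|^{2}, \nabla|\nabla Rm|^{2}\rangle$, and absorb them into the good terms $-\phi^{2}|\nabla^{2}Rm|^{2}/\tau^{3}$ (via Cauchy-Schwarz and Kato) and $-c\phi^{2}|\nabla Rm|^{4}/\tau^{5}$ (after choosing $K$ large). Combined with the bounds on $\phi$ from Lemma \ref{cutoff}, the pointwise bound $|Rm| \le \tau$, and the monotonicity $\tau' \ge 0$ (which makes $\partial_{t}(\tau^{-5}) \le 0$), one obtains at $(x_{0},t_{0})$
\[
0 \;\le\; (\partial_{t} - \Delta_{\tilde\omega})Q \;\le\; -c\,\phi^{2}|\nabla Rm|^{4}\tau^{-5} + C\,\phi^{2}|\nabla Rm|^{2}\tau^{-3} + C,
\]
which forces $\phi^{2}|\nabla Rm|^{2} \le C\tau^{3}$ at $(x_{0},t_{0})$, hence $Q(x_{0},t_{0}) \le C$ with $C$ independent of $T$. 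On $U'$ where $\phi \equiv 1$ this yields \eqref{Shi}.

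The main obstacle I expect is calibrating the weight $\tau(t)$: one must pick the exact powers of $\tau$ in $Q$ so that (a) the good negative $-|\nabla Rm|^{4}$ term dominates the various cross terms after Cauchy-Schwarz, and (b) the contribution from differentiating $\tau(t)$ in time has a sign we can control. Exploiting the hypothesis that $\tau$ is increasing is essential here; without the monotonicity, an uncontrolled term proportional to $\tau'/\tau^{6}$ would appear on the right-hand side. A secondary, more routine difficulty is matching the cutoff cross terms generated by $(\partial_{t} - \Delta)\phi^{2}$ with the good terms, for which we crucially use \emph{both} the gradient and the Laplacian bounds on $\phi$ provided by Lemma \ref{cutoff}.
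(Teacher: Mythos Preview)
Your proposal is correct and follows the classical Hamilton--Bernstein product trick, but the paper takes a somewhat different (and shorter) route. Instead of the product quantity $\phi^{2}\tau^{-5}(K\tau^{2}+|Rm|^{2})|\nabla Rm|^{2}$, the paper works with the \emph{linear} combination
\[
\tau^{-3}\phi^{2}|\nabla Rm|_{\tilde\omega}^{2}\;+\;D\,\tau^{-2}|Rm|_{\tilde\omega}^{2},
\]
in the style of Sherman--Weinkove \cite{ShW}. Here the good term is linear: the evolution of $\tau^{-2}|Rm|^{2}$ already produces $-\tau^{-2}|\nabla Rm|^{2}$, and (after handling the cutoff cross term $-2\mathrm{Re}\langle\partial\phi^{2},\bar\partial|\nabla Rm|^{2}\rangle$ via Cauchy--Schwarz against the $-\phi^{2}|\nabla^{2}Rm|^{2}$ term) one shows $(\partial_{t}-\Delta)(\tau^{-3}\phi^{2}|\nabla Rm|^{2})\le C\tau^{-2}|\nabla Rm|^{2}$; choosing $D$ large then gives $(\partial_{t}-\Delta)(\cdots)\le -\tau^{-2}|\nabla Rm|^{2}+C\tau$, and the maximum principle finishes. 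The paper's approach avoids the product cross term $\langle\nabla|Rm|^{2},\nabla|\nabla Rm|^{2}\rangle$ entirely, so there is no need for the $\nabla Q=0$ rewriting step or the choice of $K$, and the calibration of $\tau$-powers is simpler. Your product approach has the compensating advantage that it iterates verbatim to higher-order derivatives and is closer to Shi's original template; both arguments use the monotonicity $\tau'\ge0$ and the cutoff bounds from Lemma~\ref{cutoff} in the same essential way.
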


\begin{proof}
Let $\phi$ be the cutoff function obtained in Lemma \ref{cutoff} and $U' \subset\subset U$ be an open neighborhood of $X_y$ such that $\phi \equiv 1$ on $U'$. We let $A_1 \geq 1$ be a constant such that on $X\times[0,\infty)$,
\begin{align}
|\partial\phi|_{\tilde\omega(t)}^2,\ |\Delta_{\tilde\omega(t)}\phi|\le A_1\nonumber.
\end{align}
By Hamilton there is a constant $C\ge1$ depending only on dimension $n$ such that on $X\times[0,\infty)$,
\begin{equation}
\label{Rm_evolution}
(\partial_t-\Delta_{\tilde\omega})|Rm(\tilde\omega)|^2_{\tilde\omega}\le-|\nabla Rm(\tilde\omega)|^2_{\tilde\omega}+C|Rm(\tilde\omega)|^3_{\tilde\omega}.
\end{equation}
Then, combining with \eqref{type_iii.1.1} and $\frac{d}{dt}\tau\ge0$, we have on $U\times[0,\infty)$,
\begin{equation}\label{ineq_22'}
(\partial_t-\Delta_{\tilde\omega})(\tau^{-2}|Rm(\tilde\omega)|^2_{\tilde\omega})\le-\tau^{-2}|\nabla Rm(\tilde\omega)|^2_{\tilde\omega}+C\tau.
\end{equation}
On the other hand, 
\begin{align}
(\partial_t-\Delta_{\tilde\omega})|\nabla Rm(\tilde\omega)|^2_{\tilde\omega}&\le-|\nabla^2 Rm(\tilde\omega)|^2_{\tilde\omega}+C|Rm(\tilde\omega)|_{\tilde\omega}\cdot|\nabla Rm(\tilde\omega)|^2_{\tilde\omega}\nonumber\\
&\le -|\nabla^2 Rm(\tilde\omega)|^2_{\tilde\omega}+C\tau|\nabla Rm(\tilde\omega)|^2_{\tilde\omega}
\end{align}
Thus
\begin{align}\label{ineq_23'}
&(\partial_t-\Delta_{\tilde\omega})(\phi^2|\nabla Rm(\tilde\omega)|^2_{\tilde\omega})\nonumber\\
&=\phi^2(\partial_t-\Delta_{\tilde\omega})|\nabla Rm(\tilde\omega)|^2_{\tilde\omega}-\Delta_{\tilde\omega}(\phi^2)|\nabla Rm(\tilde\omega)|^2_{\tilde\omega}-2Re(\partial(\phi^2)\cdot\bar\partial|\nabla Rm(\tilde\omega)|^2_{\tilde\omega})\nonumber\\
&\le-\phi^2|\nabla^2 Rm(\tilde\omega)|^2_{\tilde\omega}+C\tau|\nabla Rm(\tilde\omega)|^2_{\tilde\omega}-2Re(\partial(\phi^2)\cdot\bar\partial|\nabla Rm(\tilde\omega)|^2_{\tilde\omega})
\end{align}
By Cauchy-Schwarz inequality,
\begin{align}
-2Re(\partial(\phi^2)\cdot\bar\partial|\nabla Rm(\tilde\omega)|^2_{\tilde\omega})&\le\phi^2|\nabla^2 Rm(\tilde\omega)|^2_{\tilde\omega}+4|\partial\phi|^2\cdot |\nabla Rm(\tilde\omega)|^2_{\tilde\omega}\nonumber\\
&\le\phi^2|\nabla^2 Rm(\tilde\omega)|^2_{\tilde\omega}+4A_1\cdot |\nabla Rm(\tilde\omega)|^2_{\tilde\omega}\nonumber,
\end{align}
and by putting it into \eqref{ineq_23'} gives, for some constant $C\ge1$, 
\begin{align}\label{ineq_23''}
(\partial_t-\Delta_{\tilde\omega})(\phi^2|\nabla Rm(\tilde\omega)|^2_{\tilde\omega})\le C\tau|\nabla Rm(\tilde\omega)|^2_{\tilde\omega}
\end{align}
and so
\begin{align}\label{ineq_23'''}
(\partial_t-\Delta_{\tilde\omega})(\tau^{-3}\phi^2|\nabla Rm(\tilde\omega)|^2_{\tilde\omega})\le C\tau^{-2}|\nabla Rm(\tilde\omega)|^2_{\tilde\omega}
\end{align}
on $U\times[0,\infty)$.
By combining \eqref{ineq_22'} and \eqref{ineq_23'''}, we can choose a constant $D\ge1$ such that there holds on $U\times[0,\infty)$ that
\begin{align}\label{ineq_23''''}
(\partial_t-\Delta_{\tilde\omega})(\tau^{-3}\phi^2|\nabla Rm(\tilde\omega)|^2_{\tilde\omega}+D\tau^{-2}| Rm(\tilde\omega)|^2_{\tilde\omega})\le -\tau^{-2}|\nabla Rm(\tilde\omega)|^2_{\tilde\omega}+C\tau.
\end{align}
Note that by our assumption \eqref{type_iii.1.1} on $|Rm|$ and that $\phi = 0$ on $\partial U$, we know that $\tau^{-3}\phi^2|\nabla Rm(\tilde\omega)|^2_{\tilde\omega}+D\tau^{-2}| Rm(\tilde\omega)|^2_{\tilde\omega} \leq D$ on $\partial U$. By applying the maximum principle arguments on $\bar U\times[0,\infty)$ we know $\tau^{-3}\phi^2|\nabla Rm(\tilde\omega)|^2_{\tilde\omega}+D\tau^{-2}| Rm(\tilde\omega)|^2_{\tilde\omega}$ is uniformly bounded on $U\times[0,\infty)$ and by $\phi \equiv 1$ on $U'$ we proved that $\tau^{-3}|\nabla Rm(\tilde\omega)|^2_{\tilde\omega}$ is uniformly bounded
on $U'\times[0,\infty)$. In other words, there is a constant $C\ge1$ such that on $U'\times[0,\infty)$,
\begin{equation}
|\nabla Rm(\tilde\omega)|_{\tilde\omega}\le C\cdot \tau^{\frac32}\nonumber,
\end{equation}
completing the proof.
\end{proof}

\section{Local Calabi's $C^3$-estimate near fibers}
\label{sect:Calabi}
As in \cite{Yau,C,PSS,ShW,ShW2}, we define a tensor $\Psi=(\Psi_{ij}^k)$ by $\Psi_{ij}^k:=\Gamma_{ij}^k-\tilde\Gamma^k_{ij}$, where $\Gamma_{ij}^k$ (resp. $\tilde\Gamma_{ij}^k$) is the Christoffel symbols of $\omega(t)$ (resp. $\tilde\omega(t)$), and $S=|\Psi|_\omega^2$.  Next we derive an upper estimate of $S$ in the setting as in Theorem \ref{tech.thm}. Note that similar local estimates appear in Sherman-Weinkove's works \cite{ShW, ShW2} in which the K\"ahler-Ricci flow (and more generally Chern-Ricci flow) solution is assumed to be locally uniformly equivalent to a \emph{fixed} metric on a ball. In our setting, the two metrics are both evolving and we allow the eigenvalues of $\tilde{\omega}(t)^{-1} \omega(t)$ to blow-up as $t \to \infty$. The \emph{good} cut-off functions constructed in Lemma \ref{cutoff} are essential in our proof to tackle these issues.

\begin{prop}\label{Calabi_est}
Assume the same setting as in Theorem \ref{tech.thm}. Let $U'$ be an open neighborhood of $X_y$ satisfying \eqref{Shi} in Proposition \ref{Shi_est}. Then we have an open neighborhood $U''$ of $X_y$ in $X$ with $U''\subset\subset U'$ and a constant $A\ge1$ such that
\begin{equation}\label{Calabi}
S\le A\sigma^4\tau
\end{equation}
on $U''\times[0,\infty)$.
\end{prop}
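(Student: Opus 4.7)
The strategy is to emulate the parabolic Calabi $C^3$-estimate of Phong--Sesum--Sturm and Sherman--Weinkove, but carrying along two extra time-dependent parameters $\sigma$ and $\tau$. First I would compute the evolution of $S=|\Psi|_\omega^2$; the standard calculation (using that both $\omega$ and $\tilde\omega$ flow by the normalized K\"ahler--Ricci equation so that the difference of Christoffels satisfies a nice Bochner-type identity) yields, on $U\times[0,\infty)$,
\begin{equation*}
(\partial_t-\Delta_\omega)S\le -|\nabla\Psi|_\omega^2-|\bar\nabla\Psi|_\omega^2+C_0\bigl(|\nabla Rm(\tilde\omega)|_\omega\sqrt{S}+|Rm(\tilde\omega)|_\omega\,S\bigr),
\end{equation*}
with $C_0$ depending only on $n$. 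Each inverse metric in a norm gains a factor of $\sigma$ when converting between $\tilde\omega$ and $\omega$, so \eqref{sigma}, \eqref{tau} and Proposition \ref{Shi_est} give $|Rm(\tilde\omega)|_\omega\le C\sigma^2\tau$ and $|\nabla Rm(\tilde\omega)|_\omega\le C\sigma^{5/2}\tau^{3/2}$. Applying Cauchy--Schwarz to the cross-term and using $\sigma,\tau\ge 1$, one obtains
\begin{equation*}
(\partial_t-\Delta_\omega)S\le -\tfrac{1}{2}\bigl(|\nabla\Psi|_\omega^2+|\bar\nabla\Psi|_\omega^2\bigr)+C_1\sigma^5\tau^3+C_1\sigma^2\tau\,S.
\end{equation*}

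Next I would invoke the parabolic Yau $C^2$-inequality, which (because both metrics satisfy \eqref{nkrf}) reads, up to constants depending on $n$,
\begin{equation*}
(\partial_t-\Delta_\omega)\mathrm{tr}_\omega\tilde\omega\le C_2|Rm(\tilde\omega)|_\omega\mathrm{tr}_\omega\tilde\omega+C_2\mathrm{tr}_\omega\tilde\omega-c_0|\Psi|_\omega^2,
\end{equation*}
with a strictly positive $c_0=c_0(n)$. Since $\mathrm{tr}_\omega\tilde\omega\le n\sigma$ by \eqref{sigma}, the positive terms on the right are controlled by $C\sigma^3\tau$, and the crucial gain is the uniformly negative multiple of $S$. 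The proof then runs the maximum principle on the auxiliary quantity
\begin{equation*}
F:=\phi^2 S+D\,\sigma^2\tau\cdot\mathrm{tr}_\omega\tilde\omega,
\end{equation*}
where $\phi$ is the cut-off from Lemma \ref{cutoff} (supported in $U'$, identically $1$ on a smaller neighborhood $U''$ of $X_y$) and $D$ is chosen large enough so that $Dc_0$ dominates $C_1$. Because $\sigma,\tau$ are increasing, the factor $\sigma^2\tau$ in front of $\mathrm{tr}_\omega\tilde\omega$ contributes a non-positive term to $(\partial_t-\Delta_\omega)F$; the cut-off terms $-\Delta_\omega(\phi^2)S$ and $-2\mathrm{Re}(\partial(\phi^2)\cdot\bar\partial S)$ are handled exactly as in \eqref{ineq_23'} using the uniform bounds on $|\partial\phi|_\omega^2$ and $|\Delta_\omega\phi|$ from Lemma \ref{cutoff} together with the gradient identity $\nabla F=0$ at the spatial-temporal maximum on $U'\times[0,T]$. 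Combining everything at the maximum point, the $-Dc_0\sigma^2\tau\,S$ term absorbs the bad $C_1\sigma^2\tau\,\phi^2 S$ term, leaving $F(x_0,t_0)\le A\sigma(t_0)^4\tau(t_0)$. Restricting to $U''$ where $\phi\equiv 1$ gives \eqref{Calabi}.

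The main obstacle I anticipate is the careful bookkeeping of powers of $\sigma$ when converting $\omega$- and $\tilde\omega$-norms of curvature, gradient of curvature, and $\Psi$ itself, so that the final bound is exactly $\sigma^4\tau$ rather than a higher power; the choice of the weight $\sigma^2\tau$ multiplying $\mathrm{tr}_\omega\tilde\omega$ in $F$ must match this bookkeeping precisely, and this is the one place where the assumption that $\sigma$ and $\tau$ are increasing is used in a non-trivial way (so that $\partial_t(\sigma^2\tau)\ge 0$ and hence works in our favor at the maximum point). The secondary difficulty, which is exactly why the cut-off functions of Lemma \ref{cutoff} were constructed in Section \ref{sect:cutoff} rather than pulled off the shelf, is that the \emph{time-uniform} bounds on $|\partial\phi|_{\omega(t)}^2$ and $|\Delta_{\omega(t)}\phi|$ are essential: they allow the maximum principle argument to proceed on the infinite time interval without the boundary and gradient terms spoiling the absorption step.
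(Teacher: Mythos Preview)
Your overall strategy---combine the evolution of $S$ with that of $\mathrm{tr}_\omega\tilde\omega$, localize with the cut-off of Lemma~\ref{cutoff}, and run the maximum principle---matches the paper's. However, two details in your bookkeeping would make the absorption step fail.

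First, the good third-order term in the evolution of $\mathrm{tr}_\omega\tilde\omega$ is not $-c_0|\Psi|_\omega^2$ with a dimensional constant $c_0$. The actual term is $-g^{\bar ji}g^{\bar qp}\tilde g_{k\bar l}\Psi^k_{ip}\overline{\Psi^l_{jq}}$, which carries $\tilde g$ (not $g$) in the covariant slot; comparing it with $S=g^{\bar ji}g^{\bar qp}g_{k\bar l}\Psi^k_{ip}\overline{\Psi^l_{jq}}$ via $\tilde g\ge\sigma^{-1}g$ yields only $-\sigma^{-1}S$, which is precisely \eqref{ineq_14.0}. With this correction, the contribution of $D\,\sigma^2\tau\,\mathrm{tr}_\omega\tilde\omega$ to $(\partial_t-\Delta_\omega)F$ is at best $-D\sigma\tau\,S$, and this cannot absorb the bad term $C_1\sigma^2\tau\,\phi^2 S$ for any fixed $D$ once $\sigma$ grows.

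Second, your monotonicity remark has the wrong sign: since $\sigma^2\tau$ is \emph{increasing} and $\mathrm{tr}_\omega\tilde\omega>0$, the extra term $\partial_t(\sigma^2\tau)\cdot\mathrm{tr}_\omega\tilde\omega$ contributes \emph{non-negatively} to $(\partial_t-\Delta_\omega)F$, and this contribution is uncontrolled because nothing in the hypotheses bounds $\partial_t\sigma$ or $\partial_t\tau$. The paper resolves both issues at once by weighting with \emph{negative} powers of $\sigma,\tau$: it applies the maximum principle to $Q=\phi^2\sigma^{-4}\tau^{-1}S+C\sigma^{-1}\mathrm{tr}_\omega\tilde\omega$. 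Then $\partial_t(\sigma^{-4}\tau^{-1})\le0$ and $\partial_t(\sigma^{-1})\le0$ genuinely help, and after the rescaling the bad coefficient on $S$ in \eqref{C36.1''} becomes $C\sigma^{-2}$, which is exactly matched by the good term $-\sigma^{-2}S$ coming from $\sigma^{-1}\cdot(-\sigma^{-1}S)$ in \eqref{ineq_14.1}. The maximum principle then bounds $Q$ uniformly, and restricting to $U''$ where $\phi\equiv1$ gives $S\le A\sigma^4\tau$.
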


\begin{proof}
Recall the evolution of $tr_{\omega}\tilde\omega$:
\begin{equation}
(\partial_t-\Delta_{\omega})tr_{\omega}\tilde\omega=-tr_{\omega}(Ric(\tilde\omega))+g^{\bar ji}g^{\bar qp}\tilde R_{i\bar jp\bar q}-g^{\bar ji}g^{\bar qp}\tilde g^{\bar ba}\nabla_{i}\tilde g_{p\bar b}\nabla_{\bar j}\tilde g_{a\bar q}\nonumber.
\end{equation}
Then by \eqref{tau} and \eqref{sigma} we get, on $U\times[0,\infty)$,

\begin{equation}\label{ineq_14.0}
(\partial_t-\Delta_{\omega})tr_{\omega}\tilde\omega\le 2n\tau\sigma^2-\sigma^{-1}S,
\end{equation}
and so
\begin{equation}\label{ineq_14.1}
(\partial_t-\Delta_{\omega})(\sigma^{-1}tr_{\omega}\tilde\omega)\le 2n\tau\sigma-\sigma^{-2}S,
\end{equation}
Recall \cite[(3.17)]{Zys17}:
\begin{equation}\label{C36.1}
(\partial_t-\Delta_\omega)S=S-|\nabla\Psi|_{\omega}^2-|\overline{\nabla}\Psi|_\omega^2+2Re(g^{\bar ji}g^{\bar qp}g_{k\bar l}(\tilde\nabla_{i}\tilde R_{p}^{\ \,k}-\nabla^{\bar b}\tilde R_{i\bar bp}^{\ \ \ \,k})\overline{\Psi_{jq}^l})
\end{equation}
and 
$$\nabla^{\bar b}\tilde R_{i\bar bp}^{\ \ \ \,k}=\Psi*Rm(\tilde\omega)+g^{\bar ba}\tilde\nabla_a\tilde R_{i\bar bp}^{\ \ \ \,k}$$
By \eqref{tau} and \eqref{Shi},
$$|\tilde\nabla_{i}\tilde R_{p}^{\ \,k}|_{\omega}\le\sigma^{\frac32}|\tilde\nabla_{i}\tilde R_{p}^{\ \,k}|_{\tilde\omega}\le C\sigma^{\frac32}\tau^{\frac32},$$
$$|\Psi*Rm(\tilde\omega)|_{\omega}\le|\Psi|_{\omega}|Rm(\tilde\omega)|_{\omega}\le C\sigma^{2}\tau|\Psi|_{\omega}$$
and 
$$|g^{\bar ba}\tilde\nabla_a\tilde R_{i\bar bp}^{\ \ \ \,k}|_{\omega}\le\sigma|\tilde\nabla^{\bar b}\tilde R_{i\bar bp}^{\ \ \ \,k}|_{\omega}\le\sigma^{\frac52}|\tilde\nabla^{\bar b}\tilde R_{i\bar bp}^{\ \ \ \,k}|_{\tilde\omega}\le C\sigma^{\frac52}\tau^{\frac32},$$
implying
$$2Re(g^{\bar ji}g^{\bar qp}g_{k\bar l}(\tilde\nabla_{i}\tilde R_{p}^{\ \,k}-\nabla^{\bar b}\tilde R_{i\bar bp}^{\ \ \ \,k})\overline{\Psi_{jq}^l})\le C\sigma^{2}\tau S+C\sigma^{\frac52}\tau^{\frac32}\sqrt{S}.$$
Therefore, we have
\begin{equation}\label{C36.1'}
(\partial_t-\Delta_\omega)S\le C\sigma^{2}\tau S+C\sigma^{\frac52}\tau^{\frac32}\sqrt{S}-|\nabla\Psi|_{\omega}^2-|\overline{\nabla}\Psi|_\omega^2.
\end{equation}
and so
\begin{equation}\label{C36.1''}
(\partial_t-\Delta_\omega)\left(\sigma^{-4}\tau^{-1}S\right)\le C\sigma^{-2}S+C\sigma^{-\frac32}\tau^{\frac12}\sqrt{S}-\sigma^{-4}\tau^{-1}(|\nabla\Psi|_{\omega}^2+|\overline{\nabla}\Psi|_\omega^2),
\end{equation}
which holds on $U'\times[0,\infty)$.

\par Now by Lemma \ref{cutoff} we choose an open neighborhood $U''$ of $X_y$ with $U''\subset\subset U'$ and fix a cutoff function $\phi$ on $X$, which is compactly supported on $U'$, identically equals to $1$ on $U''$, and satisfies, for a constant $C\ge1$, on $X\times[0,\infty)$,
\begin{align}\label{ineq_16}
|\partial\phi|_{\omega(t)}^2+|\Delta_{\omega(t)}\phi|\le C.
\end{align}
Compute:
\begin{align}
&(\partial_t-\Delta_\omega)(\phi^2\sigma^{-4}\tau^{-1}S)\nonumber\\
&=\phi^2(\partial_t-\Delta_\omega)(\sigma^{-4}\tau^{-1}S)-(\Delta_\omega\phi^2)\sigma^{-4}\tau^{-1}S-2\sigma^{-4}\tau^{-1}Re(\partial(\phi^2)\cdot\bar\partial S)\nonumber\\
&\le C\phi^2\sigma^{-2}S+C\phi^2\sigma^{-\frac32}\tau^{\frac12}\sqrt{S}-\phi^2\sigma^{-4}\tau^{-1}(|\nabla\Psi|_{\omega}^2+|\overline{\nabla}\Psi|_\omega^2)\\
& \hskip 0.5cm +C\sigma^{-4}\tau^{-1}S-2\sigma^{-4}\tau^{-1}Re(\partial(\phi^2)\cdot\bar\partial S)\nonumber\\
&\le C\sigma^{-2}S+C\sigma^{-\frac32}\tau^{\frac12}\sqrt{S}-\phi^2\sigma^{-4}\tau^{-1}(|\nabla\Psi|_{\omega}^2+|\overline{\nabla}\Psi|_\omega^2)\\
& \hskip 0.5cm -2\sigma^{-4}\tau^{-1}Re(\partial(\phi^2)\cdot\bar\partial S)\nonumber.
\end{align}
Note that 
$$-2\sigma^{-4}\tau^{-1}Re(\partial(\phi^2)\cdot\bar\partial S)\le \sigma^{-4}\tau^{-1}\phi^2(|\nabla\Psi|_{\omega}^2+|\overline{\nabla}\Psi|_\omega^2)+C\sigma^{-4}\tau^{-1}S,$$
putting which into the above inequality gives
\begin{align}\label{ineq_17.1}
(\partial_t-\Delta_\omega)(\phi^2\sigma^{-4}\tau^{-1}S)\le C\sigma^{-2}S+C\sigma^{-\frac32}\tau^{\frac12}\sqrt{S}.
\end{align}
Therefore, by setting $Q:=\phi^2\sigma^{-4}\tau^{-1}S+C\sigma^{-1}tr_{\omega}\tilde\omega$ for a sufficiently large constant $C$, then by \eqref{ineq_14.1} and \eqref{ineq_17.1} there holds on $U'\times[0,\infty)$ that
\begin{align}\label{ineq_19}
(\partial_t-\Delta_\omega)Q\le-\sigma^{-2}S+C\sigma^{-\frac32}\tau^{\frac12}\sqrt{S}+C\sigma\tau.
\end{align}
Assume $(\bar x,\bar t)$ is a maximal point of $Q$ on $\overline{U'}\times[0,T]$. If $\bar{x} \in \partial U'$, then by that $\phi = 0$ on $\partial U'$ and $tr_\omega \tilde{\omega}$ being uniformly bounded, we would have $Q(\bar{x}, \bar{t}) \leq C$ already. We may assume $\bar x\in U'$, $\bar t>0$ and $S\ge B\sigma\tau$ at $(\bar x,\bar t)$ for a sufficiently large constant $B$ so that $C\sigma^{-3/2}\tau^{1/2}\sqrt{S} \leq \frac{1}{2}\sigma^{-2}S$. Then by the maximum principle we have, at $(\bar x,\bar t)$,
\begin{align}
\sigma^{-2}S&\le C\sigma^{-\frac32}\tau^{\frac12}\sqrt{S}+C\sigma\tau\nonumber\\
&\le \frac{1}{2}\sigma^{-2}S + C\sigma\tau.
\end{align}
By rearrangement, we get
\begin{align}
S\le \tilde{C}\sigma^3\tau\nonumber
\end{align}
for some large constant $\tilde{C}$, proving that $Q(\bar x,\bar t)$ is uniformly bounded on $U'\times[0,\infty)$, and so $\sigma^{-4}\tau^{-1}S$ is uniformly bounded on $U''\times[0,\infty)$. The proof is then completed.
\end{proof}

\section{Curvature estimates near fibers and proof of Theorem \ref{tech.thm}}
\label{Rm.Bd}
In this section, we give the proof of our major results (Theorem \ref{tech.thm}). The proof is modified from \cite[Section 3]{ShW}. Since our case involves two different solutions (both are degenerate at time-infinity) and some possibly unbounded quantities, which is slightly different from the setting in \cite[Section 3]{ShW}, we present the details here for convenience of readers.
\begin{proof}[Proof of Theorem \ref{tech.thm}]
\par Firstly, by combining \eqref{C36.1'} and \eqref{Calabi}, we have on $U''\times[0,\infty)$,
\begin{equation}\label{S_ineq}
(\partial_t-\Delta_\omega)S\le C\sigma^{6}\tau^2-|\nabla\Psi|_{\omega}^2-|\overline{\nabla}\Psi|_\omega^2.
\end{equation}
By \eqref{tau} and \eqref{sigma} we also have
\begin{align}\label{Psi}
|\overline\nabla\Psi|_\omega^2=|\tilde R_{i\bar bp}^{\ \ \ \,k}-R_{i\bar bp}^{\ \ \ \,k}|_\omega^2\ge\frac{1}{2}|Rm(\omega)|_\omega^2-C|Rm(\tilde\omega)|^2_{\omega}\ge\frac{1}{2}|Rm(\omega)|_\omega^2-C\sigma^4\tau^2,
\end{align}
putting which into \eqref{S_ineq} concludes
\begin{equation}\label{S_ineq.1}
(\partial_t-\Delta_\omega)S\le C\sigma^{6}\tau^2-\frac{1}{2}|Rm(\omega)|_\omega^2,
\end{equation}
Now \eqref{S_ineq} and \eqref{S_ineq.1} imply
\begin{equation}\label{S_ineq.2}
(\partial_t-\Delta_\omega)(\sigma^{-4}\tau^{-1}S)\le C\sigma^{2}\tau-\sigma^{-4}\tau^{-1}(|\nabla\Psi|_{\omega}^2+|\overline{\nabla}\Psi|_\omega^2)
\end{equation}
and
\begin{equation}\label{S_ineq.3}
(\partial_t-\Delta_\omega)(\sigma^{-4}\tau^{-1}S)\le C\sigma^{2}\tau-\frac{1}{2}\sigma^{-4}\tau^{-1}|Rm(\omega)|_\omega^2
\end{equation}
respectively. Also recall
\begin{equation}\label{Rm_ineq}
(\partial_t-\Delta_\omega)|Rm(\omega)|_\omega^2\le-|\nabla Rm(\omega)|_\omega^2-|\overline\nabla Rm(\omega)|_\omega^2+C|Rm(\omega)|_\omega^3.
\end{equation}

Now we set $\tilde S:=\sigma^{-4}\tau^{-1}S$, which is a smooth bounded function on $U''\times[0,\infty)$. Again by Lemma \ref{cutoff} we choose an open neighborhood $U'''$ of $X_y$ with $U'''\subset\subset U''$ and fix a cut-off function $\phi$ on $X$, which is compactly supported on $U''$, identically equals to $1$ on $U'''$, and satisfies, for a constant $C\ge1$, on $X\times[0,\infty)$,
\begin{align}\label{cutoff'''}
|\partial\phi|_{\omega(t)}^2+|\Delta_{\omega(t)}\phi|\le C\nonumber.
\end{align}
Let $B$ be a sufficiently large constant so that $B -\tilde{S} > \frac{B}{2}$. We then modify the direct computations in \cite{ShW} and get:
\begin{align}
\nonumber & (\partial_t-\Delta_\omega)\left(\phi^2\frac{|Rm(\omega)|_\omega^2}{B-\tilde S}\right)\\
=&-(\Delta\phi^2)\frac{|Rm(\omega)|_\omega^2}{B-\tilde S}+\phi^2\frac{(\partial_t-\Delta_\omega)|Rm(\omega)|_\omega^2}{B-\tilde S}+\phi^2\frac{(\partial_t-\Delta_\omega)\tilde S}{(B-\tilde S)^2}|Rm(\omega)|_\omega^2\nonumber\\
&-2\phi^2\frac{|\partial\tilde S|_\omega^2|Rm(\omega)|_\omega^2}{(B-\tilde S)^3}-4Re\frac{\phi\cdot\partial\phi\cdot\bar\partial|Rm(\omega)|_\omega^2}{B-\tilde S}\nonumber\\
&-4Re\frac{\phi\partial\phi\cdot\bar\partial\tilde S}{(B-\tilde S)^2}|Rm(\omega)|_\omega^2-2Re\frac{\phi^2\cdot\partial|Rm(\omega)|_\omega^2\cdot\bar\partial\tilde S}{(B-\tilde S)^2}.
\end{align}
Using \eqref{S_ineq.2}, \eqref{Rm_ineq} and Cauchy-Schwarz inequality, we have
\begin{align}
& (B-\tilde S)^{2}(\partial_t-\Delta_\omega)\left(\phi^2\frac{|Rm(\omega)|_\omega^2}{B-\tilde S}\right)\\
\le &-(\Delta\phi^2)(B-\tilde S)|Rm(\omega)|_\omega^2\nonumber\\
&+\phi^2(B-\tilde S)(C|Rm(\omega)|_\omega^3-|\nabla Rm(\omega)|_\omega^2-|\overline\nabla Rm(\omega)|_\omega^2)\nonumber\\
&+\phi^2(C\sigma^{2}\tau-\sigma^{-4}\tau^{-1}|\nabla\Psi|_{\omega}^2-\sigma^{-4}\tau^{-1}|\overline{\nabla}\Psi|_\omega^2)|Rm(\omega)|_\omega^2\nonumber\\
&-\frac{2}{B-\tilde S}\phi^2|\partial\tilde S|_\omega^2|Rm(\omega)|_\omega^2+16|\partial\phi|_\omega^2(B-\tilde S)|Rm(\omega)|_\omega^2\nonumber\\
&+\frac12\phi^2(B-\tilde S)|\nabla Rm(\omega)|_\omega^2+\frac12\phi^2(B-\tilde S)|\overline\nabla Rm(\omega)|_\omega^2\nonumber\\
&+\frac{1}{B-\tilde S}\phi^2|\partial\tilde S|_\omega^2|Rm(\omega)|_\omega^2
+4|\partial\phi|_\omega^2(B-\tilde S)|Rm(\omega)|_\omega^2\nonumber\\
&+\frac{4}{B-\tilde S}\phi^2|\partial\tilde S|_\omega^2|Rm(\omega)|_\omega^2\nonumber\\
&+\frac12\phi^2(B-\tilde S)|\nabla Rm(\omega)|_\omega^2+\frac12\phi^2(B-\tilde S)|\overline\nabla Rm(\omega)|_\omega^2\nonumber.
\end{align}

Label the above terms $(1), (2), \ldots, (16)$. Observe that
\begin{equation}
(1)+(5)+(9)+(13)\le C\sigma^{2}\tau|Rm(\omega)|_\omega^2,
\end{equation}
\begin{equation}
(3)+(4)+(10)+(11)+(15)+(16)=0
\end{equation}
and
\begin{equation}
(8)+(12)+(14)=\frac{3}{B-\tilde S}\phi^2|\partial\tilde S|_\omega^2|Rm(\omega)|_\omega^2
\end{equation}

Then we have
\begin{align}
& (B-\tilde S)^{2}(\partial_t-\Delta_\omega)\left(\phi^2\frac{|Rm(\omega)|_\omega^2}{B-\tilde S}\right)\\
\le & \, C\sigma^{2}\tau|Rm(\omega)|_\omega^2\nonumber\\
&+\frac{3}{B-\tilde S}\phi^2|\partial\tilde S|_\omega^2|Rm(\omega)|_\omega^2\nonumber\\
&+\phi^2C(B-\tilde S)|Rm(\omega)|_\omega^3\nonumber\\
&-\phi^2\sigma^{-4}\tau^{-1}|\nabla\Psi|_{\omega}^2|Rm(\omega)|_\omega^2-\phi^2\sigma^{-4}\tau^{-1}|\overline{\nabla}\Psi|_\omega^2|Rm(\omega)|_\omega^2.
\end{align}
Label the above terms $(T_1), (T_2), \ldots, (T_5)$. Using \eqref{Psi} we have
\begin{align}
(T_3)+\frac12(T_5)&\le\phi^2C(B-\tilde S)|Rm(\omega)|_\omega^3-\phi^2\sigma^{-4}\tau^{-1}\left(\frac14|Rm(\omega)|_\omega^2-C\sigma^4\tau^2\right)|Rm(\omega)|_\omega^2\nonumber\\
&\leq\phi^2|Rm(\omega)|_\omega^3\left(BC-\frac14\sigma^{-4}\tau^{-1}|Rm(\omega)|_\omega\right)+C\tau|Rm(\omega)|_\omega^2.
\end{align}
Observing that by Cauchy-Schwarz inequality and \eqref{Calabi}, we have
$$|\partial\tilde S|_\omega^2=\sigma^{-8}\tau^{-2}|\partial S|_\omega^2\le 4\sigma^{-8}\tau^{-2}S(|\nabla\Psi|_\omega^2+|\overline\nabla\Psi|_\omega^2)\le C\sigma^{-4}\tau^{-1}(|\nabla\Psi|_\omega^2+|\overline\nabla\Psi|_\omega^2),$$
and so for sufficiently large constant $B$,
\begin{align}
(T_2)+\frac12(T_4)+\frac12(T_5)\le\left(\frac{3C}{B-\tilde S}-\frac{1}{2}\right)\phi^2\sigma^{-4}\tau^{-1}(|\nabla\Psi|_\omega^2+|\overline\nabla\Psi|_\omega^2)|Rm(\omega)|_\omega^2\le0.
\end{align}
Then we arrive at
\begin{align}
& (B-\tilde S)^{2}(\partial_t-\Delta_\omega)\left(\phi^2\frac{|Rm(\omega)|_\omega^2}{B-\tilde S}\right)\nonumber\\
& \le C\sigma^{2}\tau|Rm(\omega)|_\omega^2+\phi^2|Rm(\omega)|_\omega^3\left(BC-\frac14\sigma^{-4}\tau^{-1}|Rm(\omega)|_\omega\right).
\end{align}
Note we have discarded a term $\frac12(T_4)$.

This shows
\begin{align}\label{ev_Rm}
& (\partial_t-\Delta_\omega)\left(\phi^2\sigma^{-8}\tau^{-2}\frac{|Rm(\omega)|_\omega^2}{B-\tilde S}\right)\nonumber\\
\le & \, C\sigma^{-6}\tau^{-1}|Rm(\omega)|_\omega^2\nonumber\\
&+\sigma^{-8}\tau^{-2}(B-\tilde S)^{-2}\phi^2|Rm(\omega)|_\omega^3\left(BC-\frac14\sigma^{-4}\tau^{-1}|Rm(\omega)|_\omega\right),
\end{align}
Now consider $Q:=\phi^2\sigma^{-8}\tau^{-2}\frac{|Rm(\omega)|_\omega^2}{B-\tilde S}+C\tilde S$ for sufficiently large constant $C$. By combining \eqref{S_ineq.3} and \eqref{ev_Rm} we get, on $U''\times[0,\infty)$,
\begin{align}\label{ev_Rm.1}
(\partial_t-\Delta_\omega)Q\le& -\sigma^{-4}\tau^{-1}|Rm(\omega)|_\omega^2+C\sigma^2\tau\nonumber\\
&+\sigma^{-8}\tau^{-2}(B-\tilde S)^{-2}\phi^2|Rm(\omega)|_\omega^3\left(BC-\frac14\sigma^{-4}\tau^{-1}|Rm(\omega)|_\omega\right).
\end{align}
Suppose $(\bar x,\bar t)$ is a maximum point of $Q$ on $\overline{U''}\times[0,T]$. If $\bar{x} \in \partial U''$, then at $(\bar{x}, \bar{t})$ we have $Q$ being uniformly bounded since $\phi = 0$ on $\partial U''$ and $\tilde{S}$ is uniformly bounded. We may assume without loss of generality that $\phi(\bar x)>0$ (and hence $\bar x\in U'$), $\bar t>0$ and $|Rm(\omega)|^2_\omega\ge C\sigma^6\tau^2$ at $(\bar x,\bar t)$. Then by applying the maximum principle on \eqref{ev_Rm.1} we have at $(\bar{x},\bar{t})$
\[BC - \frac{1}{4}\sigma^{-4}\tau^{-1}|Rm(\omega)|_{\omega} \ge 0\]
and hence $|Rm(\omega)|_{\omega} (\bar{x},\bar{t}) \leq C\sigma^4\tau$ and $Q(\bar x,\bar t)\le C$. In conclusion, $Q \leq C$ on $U''\times[0,\infty)$, and hence $|Rm(\omega)|_\omega\le C\sigma^{4}\tau$ on $U'''\times[0,\infty)$, completing the proof.

\end{proof}

\section{Metric equivalences, and Proofs of Theorems \ref{blowuprate} and \ref{ind.1}}
\label{sect:MetricEqv}

In this section, we prove an important estimate between the local upper bound near a fiber $X_y$ of the Riemann curvature of a particular K\"ahler-Ricci flow solution and the local bound near $X_y$ on the eigenvalues of any other K\"ahler-Ricci flow solution.

\begin{prop}\label{equivalence}
Let $f:X\to X_{can}$ be the map in \eqref{semiample}. Fix an arbitrary $y\in X_{can}$. Assume there exist a solution $\tilde\omega(t)$, an open neighborhood $U$ of $X_y$ in $X$, and an increasing differentiable function $\tau(t) : [0, \infty) \to [1, \infty)$ such that
\begin{equation}\label{type_iii.1}
\sup_{U}|Rm(\tilde\omega(t))|_{\tilde\omega(t)}\le\tau(t).
\end{equation}
Then, we have an open neighborhood $U'$ of $X_y$ in $X$ with $U'\subset\subset U$ such that for any solution $\omega(t)$ there exists a constant $A\ge1$ satisfying
\begin{equation}\label{metric_equivalence}
e^{-A\cdot\tau(t)}\tilde\omega(t)\le\omega(t)\le e^{A\cdot\tau(t)}\tilde\omega(t)
\end{equation}
on $U'\times[0,\infty)$.
\end{prop}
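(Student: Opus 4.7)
The plan is to establish \eqref{metric_equivalence} via a localized parabolic Schwarz lemma coupled with Yau's $C^2$-trick applied with a uniformly bounded local potential, and then upgrade the resulting one-sided trace bound to a full two-sided metric equivalence using the volume form bound \eqref{vol_bd}.

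First, by combining the Aubin-Yau second-order estimate with the fact that both $\omega(t)$ and $\tilde\omega(t)$ solve \eqref{nkrf} (so the Ricci and $-\omega$ contributions of the two flows combine favorably and leave only a bisectional curvature residue), one obtains on $U$ the parabolic Schwarz inequality
\[(\partial_t - \Delta_\omega)\log \mathrm{tr}_\omega\tilde\omega \le C_n|\mathrm{Rm}(\tilde\omega)|_{\tilde\omega}\,\mathrm{tr}_\omega\tilde\omega + C_n \le C_n\tau(t)\bigl(\mathrm{tr}_\omega\tilde\omega + 1\bigr).\]
The $\mathrm{tr}_\omega\tilde\omega$ factor on the right forces the use of an auxiliary potential. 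To produce one, write $\omega = \omega^\flat_t + \sqrt{-1}\partial\bar\partial\varphi$ and $\tilde\omega = \tilde\omega^\flat_t + \sqrt{-1}\partial\bar\partial\tilde\varphi$ relative to the cohomological references $\omega^\flat_t := e^{-t}\omega_0 + (1-e^{-t})f^*\chi$ and $\tilde\omega^\flat_t := e^{-t}\tilde\omega_0 + (1-e^{-t})f^*\chi$; then
\[\omega - \tilde\omega = e^{-t}(\omega_0-\tilde\omega_0) + \sqrt{-1}\partial\bar\partial(\varphi - \tilde\varphi),\]
and the first term decays exponentially in $t$. On a smaller open set $\tilde U$ with $X_y\subset\tilde U\subset\subset U$ on which the local $\partial\bar\partial$-lemma applies, pick $\rho(t)$ with $\sqrt{-1}\partial\bar\partial\rho(t) = e^{-t}(\omega_0-\tilde\omega_0)$ and $|\rho|+|\partial_t\rho| \le Ce^{-t}$; then $\psi := \varphi - \tilde\varphi + \rho$ satisfies $\sqrt{-1}\partial\bar\partial\psi = \omega - \tilde\omega$ on $\tilde U$, and Song-Tian's $C^0$-bounds for $\varphi, \tilde\varphi$ give $\|\psi\|_{C^0}+\|\partial_t\psi\|_{C^0}\le C$ uniformly on $\tilde U\times[0,\infty)$.

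The key step is to apply Yau's trick with a \emph{frozen} constant depending on a time parameter $T$. For each $T > 0$, consider on $\overline{\tilde U}\times[0,T]$ the test function
\[Q_T := \phi^2\log\mathrm{tr}_\omega\tilde\omega - B_T\psi, \qquad B_T := 2C_n\tau(T)+1,\]
where $\phi$ is the cutoff from Lemma \ref{cutoff} with $\phi\equiv 1$ on some $U'\subset\subset \tilde U$. Since $B_T$ is constant in $t$ on $[0,T]$, no $\dot\tau$ term arises upon differentiation; using $\Delta_\omega\psi = n-\mathrm{tr}_\omega\tilde\omega$, the evolution becomes
\[(\partial_t - \Delta_\omega) Q_T \le \bigl(C_n\tau(t)\phi^2 - B_T\bigr)\mathrm{tr}_\omega\tilde\omega + C\tau(T) + \text{(cutoff cross terms)}.\]
Since $B_T \ge 2C_n\tau(t)$ on $[0,T]$, the coefficient of $\mathrm{tr}_\omega\tilde\omega$ is at most $-1$, furnishing the needed dissipation. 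Gradient cross-terms from $\Delta_\omega(\phi^2\log\mathrm{tr}_\omega\tilde\omega)$ are absorbed via standard Cauchy-Schwarz (possibly after adding a small auxiliary multiple of $\mathrm{tr}_\omega\tilde\omega$ to $Q_T$, as in \cite{ShW,Zys17}). At an interior maximum $(\bar x,\bar t)$ of $Q_T$ with $\bar t>0$, the maximum principle forces $\mathrm{tr}_\omega\tilde\omega(\bar x,\bar t) \le C\tau(T)$; parabolic boundary values are controlled by initial data and by $\phi\equiv 0$ on $\partial\tilde U$. Hence $Q_T \le C\tau(T)$ on $\overline{\tilde U}\times[0,T]$, and on $U'$ we conclude $\log\mathrm{tr}_\omega\tilde\omega(\cdot,t)\le A\tau(T)$. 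Evaluating at $t=T$ and noting that $T$ is arbitrary yields $\mathrm{tr}_\omega\tilde\omega \le e^{A\tau(t)}$ on $U'$ for every $t\ge 0$, so $\omega \ge e^{-A\tau(t)}\tilde\omega$ on $U'$ by a direct eigenvalue argument.

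The complementary inequality $\omega \le e^{A'\tau}\tilde\omega$ follows by combining the one-sided bound with \eqref{vol_bd}. Applied to both flows, \eqref{vol_bd} gives $\omega^n/\tilde\omega^n \ge c_0>0$ uniformly on $X\times[0,\infty)$. Diagonalizing $\tilde\omega^{-1}\omega$ with eigenvalues $\lambda_1,\ldots,\lambda_n > 0$, the previous step gives $\lambda_i \ge e^{-A\tau}$, and together with $\prod_i\lambda_i \ge c_0$ this forces each $\lambda_i \le c_0^{-1}e^{A(n-1)\tau} \le e^{A'\tau}$; that is, $\omega \le e^{A'\tau}\tilde\omega$ on $U'$. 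The main technical obstacle lies in executing the parabolic Yau trick with the time-dependent curvature bound $\tau(t)$: freezing $B_T$ on each interval $[0,T]$ is what prevents spurious $\dot\tau$ contributions, while the uniformly bounded local potential $\psi$ supplies the negative $\mathrm{tr}_\omega\tilde\omega$ term needed to dominate the Schwarz-lemma bound.
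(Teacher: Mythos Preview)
Your overall strategy---parabolic Schwarz lemma, Yau's potential trick, localization via the cutoff of Lemma~\ref{cutoff}, and the volume-form argument for the reverse inequality---is precisely the skeleton of the paper's proof, and the last step (upgrading the one-sided trace bound via $\omega^n\sim\tilde\omega^n$) matches exactly. However, there is a genuine technical gap in your localization step, and it is worth contrasting your handling of the time-dependent $\tau(t)$ with the paper's.

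\textbf{The gap.} In your test function $Q_T=\phi^2\log\mathrm{tr}_\omega\tilde\omega - B_T\psi$, the potential $\psi$ is \emph{not} weighted by the cutoff. The cross-term $-4\phi\,\mathrm{Re}(\partial\phi\cdot\bar\partial\log\mathrm{tr}_\omega\tilde\omega)$ then cannot be absorbed by the methods you indicate. Passing to $\log\mathrm{tr}_\omega\tilde\omega$ already consumes the good gradient term $G:=g^{\bar ji}g^{\bar qp}\tilde g^{\bar ba}\nabla_i\tilde g_{p\bar b}\nabla_{\bar j}\tilde g_{a\bar q}$ from the Schwarz computation (the Cauchy--Schwarz inequality $|\partial\,\mathrm{tr}_\omega\tilde\omega|^2\le(\mathrm{tr}_\omega\tilde\omega)\,G$ is used with no slack), so nothing remains to control $|\bar\partial\log\mathrm{tr}_\omega\tilde\omega|$. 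Your proposed remedy of adding a small multiple of $\mathrm{tr}_\omega\tilde\omega$ restores a $-\epsilon G$ term but simultaneously introduces $+\epsilon\tau(\mathrm{tr}_\omega\tilde\omega)^2$ into the evolution; since the dissipation from $-B_T\psi$ is only linear in $\mathrm{tr}_\omega\tilde\omega$, this quadratic term destroys the maximum-principle conclusion. And substituting via $\bar\partial Q_T=0$ at the maximum injects $|\bar\partial\psi|_\omega$, for which no uniform bound on the K\"ahler-potential gradients is available.

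\textbf{How the paper proceeds, and the fix.} The paper avoids this by two devices. First, it divides by $\tau$ \emph{before} localizing: since $\tau$ is increasing and (where $\mathrm{tr}_\omega\tilde\omega\ge1$) $\log\mathrm{tr}_\omega\tilde\omega\ge0$, one obtains $(\partial_t-\Delta_\omega)(\tau^{-1}\log\mathrm{tr}_\omega\tilde\omega)\le\mathrm{tr}_\omega\tilde\omega+n$ with \emph{unit} coefficient on the right, so the auxiliary potential $u$ (a bounded combination of global K\"ahler potentials, cf.\ \cite[(3.6)]{Zys17}) can carry a fixed, time-independent constant. Second---and this is the point you are missing---the cutoff multiplies the \emph{entire} combination: $P=\phi\bigl(\tau^{-1}\log\mathrm{tr}_\omega\tilde\omega+u\bigr)$. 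At an interior maximum, $\bar\partial P=0$ then expresses $\bar\partial(\tau^{-1}\log\mathrm{tr}_\omega\tilde\omega+u)$ purely in terms of $\bar\partial\phi$, so the cross-term becomes $\le C\phi^{-1}(\tau^{-1}\log\mathrm{tr}_\omega\tilde\omega+u)$ with no gradient of $u$ appearing. This yields $\mathrm{tr}_\omega\tilde\omega\le C\phi^{-4}$ at the maximum and hence $P\le\phi\log\phi^{-4}+C$, which is bounded on $(0,1]$. Your freezing trick $B_T=2C_n\tau(T)+1$ is a legitimate alternative to dividing by $\tau$, but to make the cutoff argument close you must likewise weight $\psi$ by the cutoff, e.g.\ take $Q_T=\phi^2(\log\mathrm{tr}_\omega\tilde\omega-B_T\psi)$, and then run the same $\bar\partial Q_T=0$ substitution.
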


\begin{proof}
By direct computation (see e.g. \cite[Section 3]{Zys17}) we have
\begin{equation}
(\partial_t-\Delta_{\omega})tr_{\omega}\tilde\omega=-tr_{\omega}(Ric(\tilde\omega))+g^{\bar ji}g^{\bar qp}\tilde R_{i\bar jp\bar q}-g^{\bar ji}g^{\bar qp}\tilde g^{\bar ba}\nabla_{i}\tilde g_{p\bar b}\nabla_{\bar j}\tilde g_{a\bar q}\nonumber.
\end{equation}
Combining \eqref{type_iii.1}, we get
\begin{equation}\label{ineq_5.0}
(\partial_t-\Delta_{\omega})tr_{\omega}\tilde\omega\le n\tau\cdot tr_{\omega}\tilde\omega+\tau(tr_{\omega}\tilde\omega)^2-g^{\bar ji}g^{\bar qp}\tilde g^{\bar ba}\nabla_{i}\tilde g_{p\bar b}\nabla_{\bar j}\tilde g_{a\bar q}.
\end{equation}
and hence
\begin{equation}\label{ineq_5.0'}
(\partial_t-\Delta_{\omega})\log tr_{\omega}\tilde\omega\le \tau\cdot tr_{\omega}\tilde\omega+n\tau,
\end{equation}
which by $\frac{d}{dt}\tau\ge0$ implies, at any point with $tr_\omega\tilde\omega\ge1$,
\begin{equation}\label{ineq_5.0''}
(\partial_t-\Delta_{\omega})\left(\tau^{-1}\log tr_{\omega}\tilde\omega\right)\le tr_{\omega}\tilde\omega+n,
\end{equation}

As in \cite[(3.6)]{Zys17}, there is a smooth real function $u=u(t)$ on $X\times[0,\infty)$ and a constant $C\ge1$ such that 
\begin{equation}\label{ineq_5.1}
\sup_{X\times[0,\infty)}|u|+|\partial_tu|\le C
\end{equation}
and 
\begin{equation}\label{ineq_6}
(\partial_t-\Delta_{\omega})(\tau^{-1}\log tr_{\omega}\tilde\omega+u(t))\le-tr_{\omega}\tilde\omega+C
\end{equation}
at any point in $U\times[0,\infty)$ with $tr_{\omega}\tilde\omega\ge1$. Precisely, we let $\varphi(t)$ be the K\"ahler potential of $\omega(t)$ satisfying $\omega(t)=
e^{-t}\omega_0+(1-e^{-t})f^*\chi+\sqrt{-1}\partial\bar\partial\varphi(t)$ and $\varphi(0)=0$; and similarly define $\tilde\varphi(t)$ for $\tilde{\omega}(t)$. Now, if we fix a constant $\tilde A\ge2$ such that $\tilde\omega_0\le\frac{\tilde A}{2}\cdot\omega_0$ on $X$, then we claim that $u=u(t):=2\tilde\varphi(t)-\tilde A\cdot\varphi(t)$ would satisfy both \eqref{ineq_5.1} and \eqref{ineq_6}. In fact, the \eqref{ineq_5.1} is exactly the uniform $C^0$-estimates on $X \times [0,\infty)$ of Song-Tian \cite{ST07,ST12,ST16}, while direct computations give
\begin{align}
(\partial_t-\Delta_{\omega})u&=(\partial_t-\Delta_{\omega})(2\tilde\varphi-\tilde A\cdot\varphi)\nonumber\\
&=-2tr_\omega\tilde\omega+e^{-t}tr_\omega(2\tilde\omega_0-\tilde A\cdot\omega_0)+(1-e^{-t})(2-\tilde A)tr_{\omega}f^*\chi+\partial_tu+\tilde An\nonumber\\
&\le-2tr_\omega\tilde\omega+C'\nonumber
\end{align}
on $X\times[0,\infty)$, combining which with \eqref{ineq_5.0''} immediately implies \eqref{ineq_6}.
\par By Lemma \ref{cutoff} we choose an open neighborhood $U'$ of $X_y$ with $U'\subset\subset U$ and fix a cutoff function $\phi$ on $X$, which is compactly supported on $U'$, identically equals to $1$ on $U'$, and satisfies, for a constant $C\ge1$, on $X\times[0,\infty)$,
\begin{align}
|\partial\phi|_{\omega(t)}^2+|\Delta_{\omega(t)}\phi|\le C\nonumber.
\end{align}
To apply the maximum principle, we consider the function $P(t):=\phi\cdot\left(\tau^{-1}\log tr_{\omega}\tilde\omega+u(t)\right)$. We have

\begin{align}\label{ineq_7}
(\partial_t-\Delta_{\omega})P(t)=&\phi\cdot(\partial_t-\Delta_{\omega})(\tau^{-1}\log tr_{\omega}\tilde\omega+u(t))-(\Delta_{\omega}\phi)\cdot(\tau^{-1}\log tr_{\omega}\tilde\omega+u(t))\nonumber\\
&-2Re\left(\partial\phi\cdot\bar\partial(\tau^{-1}\log tr_{\omega}\tilde\omega+u(t))\right)
\end{align}

For any $T>0$, let $(\bar x,\bar t)$ be a maximal point of $P$ on $X\times[0,T]$. We may assume $\bar t>0$, and $\phi(\bar{x}) > 0$ (otherwise $tr_{\omega}(\tilde{\omega}) \leq Ce^{A\tau}$ at the maximum point of $P$ and we are done) and by similar reason assume that $tr_{\omega}\tilde\omega(\bar x,\bar t)\ge e^{2+\tau C_2}$ so that $P(\bar x,\bar t)>0$. Applying the maximum principle in \eqref{ineq_7} and using \eqref{ineq_5}, \eqref{ineq_6}, we have at $(\bar x,\bar t)$,
\begin{align}\label{ineq_8}
0 &\le(\partial_t-\Delta_{\omega})P\nonumber\\
& =\phi\cdot(\partial_t-\Delta_{\omega})(\tau^{-1}\log tr_{\omega}\tilde\omega+u)-(\Delta_{\omega}\phi)\cdot(\tau^{-1}\log tr_{\omega}\tilde\omega+u)\nonumber\\
& \hskip 0.5cm -2Re\left(\partial\phi\cdot\bar\partial(\tau^{-1}\log tr_{\omega}\tilde\omega+u)\right)\nonumber\\
& \le\phi(-tr_{\omega}\tilde\omega+C)+C\cdot(\tau^{-1}\log tr_{\omega}\tilde\omega+u)\nonumber \\
& \hskip 0.5cm -2Re\left(\partial\phi\cdot\bar\partial(\tau^{-1}\log tr_{\omega}\tilde\omega+u)\right).
\end{align}
On the other hand, at $(\bar x,\bar t)$ we have
$$0=\bar \partial P=(\tau^{-1}\log tr_{\omega}\tilde\omega+u)\cdot\bar\partial\phi+\phi\cdot\bar\partial(\tau^{-1}\log tr_{\omega}\tilde\omega+u),$$
or equivalently (note that $\phi(\bar{x}) > 0$), we have
$$\bar\partial(\tau^{-1}\log tr_{\omega}\tilde\omega+u)=-\frac{(\tau^{-1}\log tr_{\omega}\tilde\omega+u)}{\phi}\cdot\bar\partial\phi.$$
This shows
\begin{align}
-Re\left(\partial\phi\cdot\bar\partial(\tau^{-1}\log tr_{\omega}\tilde\omega+u)\right)&=\frac{(\tau^{-1}\log tr_{\omega}\tilde\omega+u)}{\phi}\cdot|\partial\phi|_{\omega(t)}^2\nonumber\\
&\le C\frac{(\tau^{-1}\log tr_{\omega}\tilde\omega+u)}{\phi}\nonumber,
\end{align}
and by putting it into \eqref{ineq_8} we get
\begin{align}
tr_\omega\tilde\omega&\le\frac{C\cdot(\tau^{-1}\log tr_{\omega}\tilde\omega+u)}{\phi}+\frac{C(\tau^{-1}\log tr_{\omega}\tilde\omega+u)}{\phi^2}+C\nonumber\\
&\le\frac{C\log tr_{\omega}\tilde\omega+C}{\phi^2}\nonumber\\
&\le\frac{C\log tr_{\omega}\tilde\omega}{\phi^2}\nonumber
\end{align}
where in the second inequality uses the uniform bound for $u$ in \eqref{ineq_5.1}, $\tau\ge1$ and $\phi\le1$, and in the last inequality we uses $tr_{\omega}\tilde\omega\ge e^2$. Using again $tr_{\omega}\tilde\omega\ge e^2$ we know $\log tr_{\omega}\tilde\omega\le(tr_{\omega}\tilde\omega)^{1/2}$, so we arrive at
\begin{align}
tr_\omega\tilde\omega\le\frac{C(tr_{\omega}\tilde\omega)^{1/2}}{\phi^2}\nonumber,
\end{align}
which gives, at $(\bar x,\bar t)$,
\begin{align}
tr_\omega\tilde\omega\le\frac{C}{\phi^4}\nonumber.
\end{align}
Therefore,
\begin{align}
P(\bar x,\bar t)&=\phi\cdot(\tau^{-1}\log tr_{\omega}\tilde\omega+u)\nonumber\\
&\le\phi\cdot\tau^{-1}\log tr_{\omega}\tilde\omega+C\nonumber\\
&\le\phi\cdot\log\frac{1}{\phi^4}+C.
\end{align}
Since the function $s\log\frac{1}{s^4}$ is uniformly bounded for $s\in(0,1]$, we get a uniform constant $C\ge1$ such that
\begin{align}
P(\bar x,\bar t)\le C\nonumber.
\end{align}
Since $C$ does not depend on the choice of $T$, there holds on $X\times[0,\infty)$ that
\begin{equation}
P\le C\nonumber.
\end{equation}
Recall $\phi\equiv1$ on $U'$, then combining the bound of $R$ in \eqref{ineq_5.1} we obtain a constant $C\ge1$ such that on $U'\times[0,\infty)$,
\begin{equation}\label{ineq_11}
tr_\omega\tilde\omega\le e^{C\cdot\tau}.
\end{equation}
From \eqref{vol_bd} by \cite{ST07, ST12} we know the volume forms $\omega^n$ and $\tilde\omega^n$ are uniformly equivalent on $X$, so by possibly increasing $C$ if necessary we have on $U'\times[0,\infty)$,
\begin{equation}\label{ineq_12}
tr_{\tilde\omega}\omega\le C\cdot e^{(n-1)C\cdot\tau}.
\end{equation}
Combining \eqref{ineq_11} and \eqref{ineq_12}, we find a constant $A\ge1$ such that on $U'\times[0,\infty)$,
\begin{equation}\label{ineq_13}
e^{-A\cdot\tau}\tilde\omega\le\omega\le e^{A\cdot\tau}\tilde\omega.
\end{equation}
The proof is completed.
\end{proof}

\begin{proof}[Proof of Theorem \ref{blowuprate}]
For case (i), i.e. $X_y$ is a regular fiber, by Fong-Z. Zhang \cite[Theorem 1.1 and Section 6]{FZ} (see also \cite{To} for the elliptic analogue) we know any two solutions are uniformly equivalent near $X_y$. Then we can choose $\sigma$ in Theorem \ref{tech.thm} to be a constant, and so (i) follows.
\par For case (ii), i.e. $X_y$ is a singular fiber, by Proposition \ref{equivalence} we can choose $\sigma$ in Theorem \ref{tech.thm} to be of the form $e^{A\tau}$, and so (ii) follows.
\end{proof}

\begin{proof}[Proof of Theorem \ref{ind.1}]
By choosing $\tau$ in Theorem \ref{blowuprate} to be a constant, the theorem follows easily: if $y \not\in W_{\textup{II}}(\tilde{\omega}(t))$, then there exist an open neighborhood $U$ of $X_y$ and a constant $C \geq 1$ such that on $U \times [0,\infty)$ we have:
\[\sup_{U}|Rm(\tilde\omega(t))|_{\tilde\omega(t)}\le C.\]
Proposition \ref{equivalence} shows there exist an open neighborhood $U' \subset\subset U$ of $X_y$ and a constant $A \geq 1$ such that on a $U' \times [0,\infty)$ we have:
\[e^{-AC}\tilde{\omega}(t) \leq \omega(t) \leq e^{AC}\tilde{\omega}(t).\]
Finally, we apply Theorem \ref{tech.thm} with $\sigma(t) \equiv e^{AC}$ to get that $y \not\in W_{\textup{II}}(\omega(t))$. It completes the proof.

\end{proof}

\section{Blow-up rate estimates}
\label{sect:e^t}
In this final section, we shall first prove the following general lemma, and then apply it to estimate curvature blow-up rate in certain settings.
\begin{lem}\label{lem_7.1}
Let $B_r$, $r>0$, be the ball in $\mathbb C^n$ defined by $B_r=\{(z^1,...,z^n)\in\mathbb C^n||z^1|^2+\ldots+|z^n|^2<r^2\}$, and $\omega_E$ the standard Euclidean metric on $\mathbb C^n$. Fix a smooth family $\tilde\omega=\tilde\omega(t), t\in[0,\infty)$, of K\"ahler metrics on $B_1$ satisfying
\begin{itemize}
\item[(1)] for any $t\ge0$, $\tilde\omega(t)$ is a flat metric;
\item[(2)] there is a constant $C$ such that $\partial_t\tilde\omega\le C\tilde\omega$ on $B_1\times[0,\infty)$;
\item[(3)] there exist an increasing differentiable function $\alpha(t) : [0, \infty) \to [1, \infty)$ and a constant $C\ge1$ such that $\tilde\omega(t)\ge C^{-1}\alpha^{-1}\omega_E$ on $B_1\times[0,\infty)$.
\end{itemize}
Let $\omega=\omega(t)$ be a solution to the K\"ahler-Ricci flow \eqref{nkrf} on $B_1\times[0,\infty)$. Assume there is an increasing differentiable function $\beta(t) : [0, \infty) \to [1, \infty)$ such that
\begin{equation}\label{bounds}
\tilde\omega\le\omega\le\beta\cdot\tilde\omega \quad \text{ on }  B_1\times[0,\infty),
\end{equation}
then we have a constant $C\ge1$ such that on $B_{1/2}\times[0,\infty)$,
$$|Rm(\omega)|_\omega\le C\alpha\cdot\beta.$$
\end{lem}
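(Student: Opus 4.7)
My plan is to adapt the proof of Theorem \ref{tech.thm} to this local Euclidean setting, mimicking the three-stage strategy of Sections \ref{sect:Shi}--\ref{Rm.Bd} but exploiting the flatness of $\tilde\omega$ to trivialize the first two stages. The essential differences with the earlier sections are: (a) all ``$\tau$''-terms coming from $Rm(\tilde\omega)$ and $\tilde\nabla Rm(\tilde\omega)$ vanish, since $\tilde\omega(t)$ is flat for every $t$; (b) the reference $\tilde\omega$ is not a K\"ahler--Ricci flow solution and only satisfies the weaker time bound $\partial_t\tilde\omega\le C\tilde\omega$ from assumption (2); and (c) the good cutoff of Lemma \ref{cutoff} is replaced by a standard Euclidean cutoff on $B_1$, which is the mechanism by which the factor $\alpha$ enters the final bound.

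First, I would fix $\phi\in C_c^\infty(B_1)$ with $\phi\equiv 1$ on $B_{1/2}$ and $|\partial\phi|_{\omega_E}+|\partial\bar\partial\phi|_{\omega_E}\le C$ universal. Assumption (3) combined with $\omega\ge\tilde\omega$ yields $\omega_E\le C\alpha\,\omega$, hence $|\partial\phi|^2_\omega+|\Delta_\omega\phi|\le C\alpha$ on $B_1\times[0,\infty)$. Then, setting $\Psi^k_{ij}=\Gamma^k_{ij}-\tilde\Gamma^k_{ij}$ and $S=|\Psi|^2_\omega$, I would rerun the Calabi-type computations of Section \ref{sect:Calabi}: since $\tilde\omega$ is flat, every occurrence of $Rm(\tilde\omega)$, $Ric(\tilde\omega)$ and $\tilde\nabla Rm(\tilde\omega)$ in the evolutions of $\operatorname{tr}_\omega\tilde\omega$ and $S$ vanishes, and the only surviving ``extra'' contributions come from the $\partial_t\tilde g$-terms, pointwise controlled by assumption (2). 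Applying the maximum principle to an auxiliary function $Q=\phi^2 S+D\operatorname{tr}_\omega\tilde\omega$ for $D$ large, and using Cauchy--Schwarz to absorb the cross term $\partial\phi\cdot\bar\partial S$ at cost $|\partial\phi|^2_\omega\le C\alpha$, should yield $S\le C\alpha\beta^{k}$ on $B_{3/4}\times[0,\infty)$ for some fixed exponent $k$.

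With this $C^3$-estimate in hand, I would follow the Sherman--Weinkove maximum-principle argument of Section \ref{Rm.Bd} with ``$\tau=0$'': the crucial bound $|\overline{\nabla}\Psi|^2_\omega\ge\tfrac12|Rm(\omega)|^2_\omega$ from \eqref{Psi} now holds \emph{without} any $\sigma^4\tau^2$ correction, so essentially the only lower-order error in the maximum-principle computation for $\phi^2\,|Rm(\omega)|^2_\omega/(B-\widetilde S)$ comes from the $\partial\phi$ cross term. The cubic $|Rm(\omega)|^3_\omega$ generated by Hamilton's evolution is absorbed by $|\overline{\nabla}\Psi|^2_\omega\,|Rm(\omega)|^2_\omega$ once $|Rm(\omega)|_\omega\gtrsim\alpha\beta$, and the standard rearrangement forces $|Rm(\omega)|_\omega\le C\alpha\beta$ on $B_{1/2}\times[0,\infty)$, as desired.

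The principal technical difficulty is the bookkeeping of powers of $\alpha$ and $\beta$: $\alpha$ enters only through the cutoff conversion $\omega_E\le C\alpha\,\omega$, whereas $\beta$ enters through metric conversions of the form $|T|^2_{\tilde\omega}\le\beta^{O(1)}|T|^2_\omega$ inside the Calabi calculation and through the Schwarz-type evolution of $\operatorname{tr}_\omega\tilde\omega$. An additional subtlety is that assumption (2) only provides a $C^0$-bound on $\partial_t\tilde g_{i\bar j}$ and not on its covariant derivatives, so some care is required in controlling $\partial_t\tilde\Gamma^k_{ij}$ inside the evolution of $S$; but once the Calabi tensor is controlled, the higher-derivative issues become subleading and the argument should close cleanly.
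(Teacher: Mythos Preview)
Your approach is essentially the same as the paper's: a Euclidean cutoff with $|\partial\phi|^2_\omega+|\Delta_\omega\phi|\le C\alpha$, a local Calabi $C^3$-estimate for $S$, then the Sherman--Weinkove barrier from Section~\ref{Rm.Bd}. However, your specific auxiliary function $Q=\phi^2 S+D\operatorname{tr}_\omega\tilde\omega$ with $D$ constant does not close. From $(\partial_t-\Delta_\omega)(\phi^2 S)\le C\alpha S$ and $(\partial_t-\Delta_\omega)\operatorname{tr}_\omega\tilde\omega\le C-\beta^{-1}S$ one obtains
\[
(\partial_t-\Delta_\omega)Q\le (C\alpha-D\beta^{-1})S+CD,
\]
and for the negative term to dominate one would need $D>C\alpha(t)\beta(t)$, impossible for fixed $D$ once $\alpha\beta\to\infty$. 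The paper instead takes $Q=\phi^2\alpha^{-1}\beta^{-1}S+C\operatorname{tr}_\omega\tilde\omega$: dividing by the increasing factor $\alpha\beta$ converts $C\alpha S$ into $C\beta^{-1}S$, which is then absorbed, and the maximum principle yields exactly $S\le C\alpha\beta$. This sharp exponent (your ``$k$'' must equal $1$) is essential, since in the second stage one sets $\tilde S=\alpha^{-1}\beta^{-1}S$ and the barrier argument of Section~\ref{Rm.Bd} produces $|Rm|_\omega\le C\alpha\beta$ precisely because the threshold in the cubic-versus-quartic competition is $\alpha^{-1}\beta^{-1}|Rm|_\omega\gtrsim 1$; with $k>1$ you would only conclude $|Rm|_\omega\le C\alpha\beta^k$.

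Your remark about $\partial_t\tilde\Gamma$ is legitimate and in fact sharper than what the paper writes: the clean identity $(\partial_t-\Delta_\omega)S=S-|\nabla\Psi|^2_\omega-|\overline\nabla\Psi|^2_\omega$ used there tacitly requires $\partial_t\tilde\Gamma=0$, not merely flatness of each $\tilde\omega(t)$. In both applications of the lemma in Section~\ref{sect:e^t} the reference metric has constant coefficients in the $z$-coordinates, so $\tilde\Gamma\equiv0$ and the issue does not arise.
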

\begin{proof}
The proof is similar to our above discussions. The first step is to obtain a local Calabi's $C^3$-estimate, which is very similar to \cite[Proposition 2.7]{TWY}. Since we are in a setting more general than \cite[Proposition 2.7]{TWY}, we will provide some details. As before, define a tensor $\Psi=(\Psi_{ij}^k)$ by $\Psi_{ij}^k:=\Gamma_{ij}^k-\tilde\Gamma^k_{ij}$, where $\Gamma_{ij}^k$ (resp. $\tilde\Gamma_{ij}^k$) is the Christoffel symbols of $\omega(t)$ (resp. $\tilde\omega(t)$), and $S=|\Psi|_\omega^2$. Given conditions (3) we may fix a cutoff function $\phi$ on $B_1$, which is compactly supported on $B_1$, identically equals to $1$ on $B_{2/3}$, and satisfies, for a constant $C\ge1$, on $\overline B_1\times[0,\infty)$,
\begin{align}
|\partial\phi|_{\omega(t)}^2+|\Delta_{\omega(t)}\phi|\le C\alpha.
\end{align}
Then by direct computation we have
\begin{align}\label{S_e^t_0}
(\partial_t-\Delta_\omega)S&=S-|\nabla\Psi|_\omega^2-|\overline \nabla\Psi|_\omega^2\nonumber\\
&=S-|\nabla\Psi|_\omega^2-|Rm(\omega)|_\omega^2
\end{align}
and hence
\begin{align}\label{S_e^t}
(\partial_t-\Delta_\omega)(\phi^2S)\le C\alpha S.
\end{align}

On the other hand, given conditions (1), (2) and \eqref{bounds} we can use direct computation to get
\begin{align}
(\partial_t-\Delta_\omega)tr_\omega\tilde\omega&=tr_\omega(\tilde\omega+\partial_t\tilde\omega)+g^{\bar ji}g^{\bar qp}\tilde R_{i\bar jp\bar q}-g^{\bar ji}g^{\bar qp}\tilde g^{\bar ba}\nabla_i\tilde g_{p\bar b}\nabla_{\bar j}\tilde g_{a\bar q}\nonumber\\
&=tr_\omega(\tilde\omega+\partial_t\tilde\omega)+g^{\bar ji}g^{\bar qp}\tilde R_{i\bar jp\bar q}-
g^{\bar ji}g^{\bar qp}\tilde g_{d\bar e}\Psi^d_{ip}\overline{\Psi^e_{jq}}\nonumber\\
&\le C-\beta^{-1}S.
\end{align}
Note that \eqref{S_e^t} implies
\begin{align}\label{S_e^t.1}
(\partial_t-\Delta_\omega)(\phi^2\alpha^{-1}\beta^{-1}S)\le C\beta^{-1}S.
\end{align}
Therefore, choosing a sufficiently large constant $C$ gives
$$(\partial_t-\Delta_\omega)(\phi^2\alpha^{-1}\beta^{-1}S+Ctr_\omega\tilde\omega)\le-\beta^{-1}S+C.$$
Now by apply the maximum principle, we conclude that $\phi^2\alpha^{-1}\beta^{-1}S+Ctr_\omega\tilde\omega$ is a bounded function on $B_1\times[0,\infty)$ (here we have used $tr_\omega\tilde\omega$ is uniformly bounded by the first inequality in \eqref{bounds}), and hence 
\begin{equation}\label{S_e^t.1'}
S\le C\alpha\cdot\beta  \quad \text{ on } B_{2/3}\times[0,\infty).
\end{equation}
Now, we are going to bound $Rm(\omega)$. Combining the above \eqref{S_e^t.1'} and \eqref{S_e^t_0}, we have on $B_{2/3}\times[0,\infty)$,
\begin{align}\label{S_e^t_2}
(\partial_t-\Delta_\omega)(\alpha^{-1}\beta^{-1}S)\le C-\alpha^{-1}\beta^{-1}(|\nabla\Psi|_\omega^2+|\overline \nabla\Psi|_\omega^2)
\end{align}
and 
\begin{align}\label{S_e^t_3}
(\partial_t-\Delta_\omega)(\alpha^{-1}\beta^{-1}S)\le C-\alpha^{-1}\beta^{-1}|Rm(\omega)|_\omega^2.
\end{align}
Next, fix a cutoff function $\rho$ on $B_{2/3}$, which is compactly supported on $B_{2/3}$, identically equals to $1$ on $B_{1/2}$, and satisfies, for a constant $C\ge1$, on $\overline B_{2/3}\times[0,\infty)$,
\begin{align}
|\partial\rho|_{\omega(t)}^2+|\Delta_{\omega(t)}\rho|\le C\alpha.
\end{align}
Then we can use very similar analysis in Section \ref{Rm.Bd} to see that, for two sufficiently large constants $B, C$ there holds on $B_{2/3}\times[0,\infty)$,
\begin{align*}
& (\partial_t-\Delta_\omega)\left(\rho^2\frac{|Rm(\omega)|_\omega^2}{B-\alpha^{-1}\beta^{-1}S}\right)\\
& \le C\alpha|Rm(\omega)|_\omega^2+\frac{\rho^2}{(B-\alpha^{-1}\beta^{-1}S)^2}|Rm(\omega)|_\omega^3(BC-\frac12\alpha^{-1}\beta^{-1}|Rm(\omega)|_\omega)\nonumber
\end{align*}
and hence
\begin{align}
&(\partial_t-\Delta_\omega)\left(\rho^2\alpha^{-2}\beta^{-2}\frac{|Rm(\omega)|_\omega^2}{B-\alpha^{-1}\beta^{-1}S}\right)\nonumber\\
&\le C\alpha^{-1}\beta^{-2}|Rm(\omega)|_\omega^2+\frac{\rho^2\alpha^{-2}\beta^{-2}}{(B-\alpha^{-1}\beta^{-1}S)^2}|Rm(\omega)|_\omega^3(BC-\frac12\alpha^{-1}\beta^{-1}|Rm(\omega)|_\omega)\nonumber.
\end{align}
Together with \eqref{S_e^t_3}, we find a sufficiently large constant $C\ge1$ such that 
\begin{align}
&(\partial_t-\Delta_\omega)\left(\rho^2\alpha^{-2}\beta^{-2}\frac{|Rm(\omega)|_\omega^2}{B-\alpha^{-1}\beta^{-1}S}+C\alpha^{-1}\beta^{-1}S\right)\nonumber\\
&\le C-\alpha^{-1}\beta^{-1}|Rm(\omega)|_\omega^2+\frac{\rho^2\alpha^{-2}\beta^{-2}}{(B-\alpha^{-1}\beta^{-1}S)^2}|Rm(\omega)|_\omega^3(BC-\frac12\alpha^{-1}\beta^{-1}|Rm(\omega)|_\omega)\nonumber.
\end{align} 
Now applying the similar maximum principle arguments in Section \ref{Rm.Bd} completes the proof.
\end{proof}

We shall provide two applications of Lemma \ref{lem_7.1}. The first one is a proof of Theorem \ref{thm_Rm}.

\begin{proof}[Proof of Theorem \ref{thm_Rm}]
The first part $A^{-1}e^{t}\le\sup_{U}|Rm(\omega(t))|_{\omega(t)}$ is contained in \cite{ToZyg}. In fact, we may choose a regular fiber $X_y$ such that $U\cap X_y$ is an open subset in $X_y$. Then any Ricci-flat K\"ahler metric on $X_y$ can not be flat on $U\cap X_y$ (otherwise we get a flat metric on $X_y$, a contradiction). Therefore, the arguments in \cite[page 2941, (C, Case 1)]{ToZyg} can be applied to prove the lower estimate. 
\par Next we show the second part. To this end, by passing to a finite open cover, we may assume $U$ is a local chart centered at $x\in U$ and $U=\{(z^1,...,z^n)\in \mathbb C^n||z^1|^2+...+|z^n|^2<1\}$. Then similar to \cite[Proposition 2.7]{TWY}, we write $\mathbb C^{n}=\mathbb C^{k}\oplus\mathbb C^{n-k}$, and $\omega_{E}$, $\omega_{E}^{(k)}$, $\omega_{E}^{(n-k)}$ the Euclidean metrics on $\mathbb C^{n}$, $\mathbb C^{k}$, $\mathbb C^{n-k}$ respectively. Set $\omega_{E,t}:=\omega_{E}^{(k)}+e^{-t}\omega_{E}^{(n-k)}$. By Fong-Z. Zhang \cite[Theorem 1.1 and Section 6]{FZ} we have a constant $C\ge1$ such that on $U\times[0,\infty)$,
\begin{equation}
C^{-1}\omega_{E,t}\le\omega(t)\le C\omega_{E,t}.
\end{equation}
Then we can complete the proof by applying Lemma \ref{lem_7.1} with $\tilde\omega(t)=C^{-1}\omega_{E,t}$, $\alpha(t)=e^t$ and $\beta(t)\equiv C^2$.
\end{proof}

We observe one more application of Lemma \ref{lem_7.1}, which concerns the K\"ahler-Ricci flow on a compact K\"ahler manifold with semi-negative holomorphic sectional curvature. Pioneered by a conjecture of Yau in 1970s, compact K\"ahler manifolds with (semi-)negative holomorphic sectional curvature have been studied widely. More recently, after a breakthrough by Wu-Yau \cite{WY1}, there are many progresses on this subjects, including \cite{DT,HLW2,HLWZ,No,ToY}, just to mention a few. Here, by applying Lemma \ref{lem_7.1} we observe a curvature estimate of the K\"ahler-Ricci flow in terms of the existence of a K\"ahler metric with semi-negative holomorphic sectional curvature. 

\begin{prop}
Let $X$ be a compact K\"ahler manifold. Assume there exists a K\"ahler metric on $X$ with semi-negative holomorphic sectional curvature (and hence by \cite{ToY} $K_X$ is nef). Then for any long-time solution $\omega(t)$ to the K\"ahler-Ricci flow \eqref{nkrf} on X, there is a constant $C\ge1$ such that on $X\times[0,\infty)$,
$$|Rm(\omega(t))|_{\omega(t)}\le C\cdot e^{(n+1)t}.$$
\end{prop}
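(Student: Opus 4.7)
\textbf{The plan} is to apply Lemma \ref{lem_7.1} locally on holomorphic coordinate charts covering $X$, after establishing appropriate two-sided bounds between $\omega(t)$ and $\omega_H$ using the semi-negative holomorphic sectional curvature (HSC) hypothesis. Since $\omega_H$ is a fixed K\"ahler metric, it is uniformly comparable to the Euclidean metric on any chart, so the problem reduces to global metric-equivalence estimates.

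\textbf{Step 1: Global metric bounds.} By Royden's inequality applied to the non-negative Hermitian form $g(t)^{-1}$, the hypothesis HSC$(\omega_H)\le 0$ forces $g^{i\bar j}g^{k\bar l}R(\omega_H)_{i\bar j k\bar l}\le 0$ pointwise on $X$. Inserting this into the standard evolution formula for $\mathrm{tr}_\omega\omega_H$ under the normalized flow gives an inequality of the shape
\[
(\partial_t-\Delta_\omega)\mathrm{tr}_\omega\omega_H \le \mathrm{tr}_\omega\omega_H + \text{(bounded terms from the curvature of }\omega_H\text{)},
\]
and the maximum principle on the compact manifold $X$ then yields $\mathrm{tr}_\omega\omega_H\le Ce^t$, equivalently
\[
\omega(t)\ge c\,e^{-t}\omega_H\quad\text{on }X\times[0,\infty).
\]
A complementary argument on $\mathrm{tr}_{\omega_H}\omega$ --- using Chern-Lu together with Royden's trick and the $-\omega$ term from the normalization --- yields an upper bound of the form
\[
\omega(t)\le C\,e^{(n-1)t}\omega_H\quad\text{on }X\times[0,\infty).
\]

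\textbf{Step 2: Localization and application of Lemma \ref{lem_7.1}.} Cover $X$ by finitely many holomorphic coordinate balls $B_1\subset\mathbb{C}^n$; on each chart $\omega_H\asymp\omega_E$, so Step 1 transfers to $c_0 e^{-t}\omega_E\le\omega(t)\le C_0 e^{(n-1)t}\omega_E$ there. Set $\tilde\omega(t):=c_0 e^{-t}\omega_E$; this family is flat for each $t$, satisfies $\partial_t\tilde\omega=-\tilde\omega\le\tilde\omega$, and $\tilde\omega\ge c_0 e^{-t}\omega_E$, so we may take $\alpha(t)=e^t/c_0$. The upper bound rewrites as $\omega(t)\le(C_0/c_0)\,e^{nt}\tilde\omega(t)$, giving $\beta(t)=(C_0/c_0)\,e^{nt}$. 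Lemma \ref{lem_7.1} then produces
\[
|Rm(\omega(t))|_{\omega(t)} \le C'\,\alpha(t)\beta(t) \le C''\,e^{(n+1)t}
\]
on $B_{1/2}$ of each chart, and covering $X$ by finitely many such balls completes the proof.

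\textbf{Main obstacle.} The hard part is the upper bound in Step 1. The lower bound $\omega(t)\ge c\,e^{-t}\omega_H$ follows relatively cleanly because Royden's inequality delivers exactly the sign $g^{i\bar j}g^{k\bar l}R(\omega_H)_{i\bar j k\bar l}\le 0$ needed in the evolution of $\mathrm{tr}_\omega\omega_H$. The upper bound is subtler: in the evolution of $\mathrm{tr}_{\omega_H}\omega$ the relevant curvature contribution of $\omega_H$ appears as a Ricci-type rather than bisectional-type contraction, which is not directly controlled by HSC$\le 0$ alone. One must pair the Chern-Lu inequality with an auxiliary quantity (e.g., a linear combination of K\"ahler potentials of $\omega$ and $\omega_H$, or $\log(\omega^n/\omega_H^n)$) to close the maximum principle argument, and the exponent $n-1$ ultimately arises from balancing the dimension-dependent interaction between this auxiliary term and the normalization $-\omega$ of the flow.
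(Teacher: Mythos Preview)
Your overall architecture matches the paper's exactly: obtain two-sided global bounds $c\,e^{-t}\omega_H\le\omega(t)\le C\,e^{(n-1)t}\omega_H$, pass to coordinate balls where $\omega_H\asymp\omega_E$, set $\tilde\omega(t)=c_0e^{-t}\omega_E$, and apply Lemma~\ref{lem_7.1} with $\alpha(t)=e^t$ and $\beta(t)=C'e^{nt}$. Your lower bound via Royden's inequality and the evolution of $\mathrm{tr}_\omega\omega_H$ is precisely the argument the paper cites from Nomura \cite{No}.

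The divergence is in the upper bound, which you flag as the main obstacle. The paper does \emph{not} run a Chern--Lu argument on $\mathrm{tr}_{\omega_H}\omega$ at all. Instead it uses only the standard uniform volume-form upper bound $\omega(t)^n\le C\,\omega_H^n$, valid for any long-time solution of the normalized flow on a compact manifold with $K_X$ nef (see \cite[Corollary~2.3(ii)]{SW}); this needs no curvature hypothesis on $\omega_H$ whatsoever. Combining this with the lower bound already obtained, elementary linear algebra on the eigenvalues $\lambda_1,\dots,\lambda_n$ of $\omega(t)$ relative to $\omega_H$ gives
\[
\lambda_j \;\le\; \frac{\prod_i\lambda_i}{\prod_{i\ne j}\lambda_i}\;\le\; C\cdot\bigl(Ce^{t}\bigr)^{n-1}\;=\;C'e^{(n-1)t},
\]
which is exactly the upper bound you want. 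So the ``subtle'' Chern--Lu step, with its unfavorable curvature contraction and the need for an auxiliary potential, is entirely avoidable: your main obstacle is an artifact of the route you chose, and the exponent $n-1$ comes out of one line of arithmetic rather than any delicate balancing.
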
 

We may point out that here we don't need to assume semi-ampleness of $K_X$.

\begin{proof}
Fix a K\"ahler metric $\omega_0$ on $X$ with semi-negative holomorphic sectional curvature. It was proved in \cite[Section 3]{No} that on $X\times[0,\infty)$,
$$\omega(t)\ge C^{-1}e^{-t}\omega_0.$$
Then using the uniform upper bound of volume form (see e.g. \cite[Corollary 2.3(ii)]{SW}) one gets
$$\omega(t)\le Ce^{(n-1)t}\omega_0.$$
Now, after passing to a local chart, we can apply Lemma \ref{lem_7.1} with $\tilde\omega(t)=C^{-1}e^{-t}\omega_E$, $\alpha(t)=e^t$ and $\beta(t)=C^2e^{nt}$ to conclude this proposition.
\end{proof}

\section*{Acknowledgements}
The authors thank professors Huai-Dong Cao, Jian Song, Gang Tian, Valentino Tosatti, Ben Weinkove and Zhenlei Zhang for interest and comments on a previous version of this paper. The second-named author also thanks professors Huai-Dong Cao and Gang Tian for constant encouragement and support and Wangjian Jian for valuable comments and discussions. Finally, the authors would like to thank the referees for their careful readings and very useful comments and suggestions.

\end{document}